  \theoremstyle{plain}
  \newtheorem*{thm*}{\protect\theoremname}
\theoremstyle{plain}
\newtheorem{thm}{\protect\theoremname}[section]
  \theoremstyle{plain}
  \newtheorem{lem}[thm]{\protect\lemmaname}
  \theoremstyle{plain}
  \newtheorem{cor}[thm]{\protect\corollaryname}
  \theoremstyle{remark}
  \newtheorem{rem}[thm]{\protect\remarkname}
\newcommand{\lyxaddress}[1]{
\par {\raggedright #1
\vspace{1.4em}
\noindent\par}
}
  \providecommand{\corollaryname}{Corollary}
  \providecommand{\lemmaname}{Lemma}
  \providecommand{\remarkname}{Remark}
  \providecommand{\theoremname}{Theorem}
\providecommand{\theoremname}{Theorem}
\begin{document}
\global\long\def\ric{\mathrm{Ric}}

\global\long\def\biric{\mathrm{BiRic}}

\global\long\def\vol{\mathrm{vol}}

\global\long\def\dist{\mathrm{dist}}

\title{Vanishing theorems for $L^{2}$ harmonic forms on complete Riemannian
manifolds}

\author{Matheus Vieira}
\maketitle
\begin{abstract}
This paper contains some vanishing theorems for $L^{2}$ harmonic
forms on complete Riemannian manifolds with a weighted Poincaré inequality
and a certain lower bound of the curvature. The results are in the
spirit of Li-Wang and Lam, but without assumptions of sign and growth
rate of the weight function, so they can be applied to complete stable
hypersurfaces.
\end{abstract}

\section{Introduction}

It is an interesting problem in geometry and topology to find sufficient
conditions for the space of harmonic $k$-forms to be trivial.

For compact manifolds, by Hodge theory, the space of harmonic $k$-forms
is isomorphic to the $k$-th de Rham cohomology group. In particular,
if the space of harmonic $k$-forms is trivial then the $k$-th de
Rham cohomology group is trivial.

For complete Riemannian manifolds it is natural to consider $L^{2}$
harmonic forms. Even though the space of $L^{2}$ harmonic $k$-forms
is not necessarily isomorphic to the $k$-th de Rham cohomology group,
the theory of $L^{2}$ harmonic one-forms can be used to study the
topology at infinity. Li and Tam proved that for a complete Riemannian
manifold if the space of $L^{2}$ harmonic one-forms is trivial then
the manifold has at most one non-parabolic end \cite{LT1992}. In
particular, if the space of $L^{2}$ harmonic one-forms is trivial
and there are no parabolic ends then the manifold is connected at
infinity. It is well known that certain geometric conditions imply
the non-existence of parabolic ends, for example when the manifold
satisfies a certain Sobolev inequality \cite[Lemma 20.12]{LBOOK}.
This shows that the vanishing of $L^{2}$ harmonic one-forms is closely
related to the topology at infinity of complete Riemannian manifolds.

For a complete Riemannian manifold $M$ recall that the first eigenvalue
of the Laplacian is defined by
\[
\lambda_{1}\left(M\right)=\inf_{\phi\in C_{c}^{\infty}\left(M\right)}\frac{\int_{M}\left|\nabla\phi\right|^{2}}{\int_{M}\phi^{2}}.
\]

Li and Wang proved the following vanishing theorem for $L^{2}$ harmonic
one-forms on complete Riemannian manifolds with positive spectrum.
\begin{thm*}[Li-Wang \cite{LW2001}]
 If a complete Riemannian manifold $M^{n}$ has positive spectrum
$\lambda_{1}\left(M^{n}\right)>0$ and the Ricci curvature satisfies
\[
\ric\geq-a\lambda_{1}\left(M^{n}\right)
\]
for some $0<a<\frac{n}{n-1}$, then the space of $L^{2}$ harmonic
one-forms on $M^{n}$ is trivial.
\end{thm*}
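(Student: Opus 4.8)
The plan is to combine the classical Bochner technique with the variational characterization of $\lambda_{1}$. Let $\omega$ be an $L^{2}$ harmonic one-form on $M^{n}$. Since $M$ is complete, $\omega$ is closed and co-closed, so the Weitzenb\"ock formula gives, at points where $\omega\neq0$,
\[
\tfrac12\,\Delta|\omega|^{2}=|\nabla\omega|^{2}+\ric(\omega,\omega),
\]
where $\Delta$ is the Laplace--Beltrami operator. Writing $f=|\omega|$ and using $\tfrac12\Delta f^{2}=f\Delta f+|\nabla f|^{2}$, the refined Kato inequality $|\nabla\omega|^{2}\ge\tfrac{n}{n-1}|\nabla f|^{2}$ valid for harmonic one-forms, and the hypothesis $\ric\ge-a\lambda_{1}$, I obtain
\[
f\,\Delta f\ \ge\ \tfrac{1}{n-1}\,|\nabla f|^{2}-a\lambda_{1}\,f^{2},
\]
an inequality I would make rigorous across the zero set of $\omega$ in the usual way (replacing $f$ by $\sqrt{f^{2}+\varepsilon^{2}}$, or observing that it persists distributionally on all of $M$).

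Next I would fix a compactly supported Lipschitz cutoff $\phi$, multiply the inequality above by $\phi^{2}$, integrate over $M$, and integrate by parts on the left, which yields
\[
a\lambda_{1}\int_{M}\phi^{2}f^{2}\ \ge\ \tfrac{n}{n-1}\int_{M}\phi^{2}|\nabla f|^{2}+2\int_{M}\phi f\,\langle\nabla\phi,\nabla f\rangle.
\]
On the other hand, applying the estimate $\lambda_{1}\int\psi^{2}\le\int|\nabla\psi|^{2}$ to the test function $\psi=\phi f$ gives
\[
\lambda_{1}\int_{M}\phi^{2}f^{2}\ \le\ \int_{M}\phi^{2}|\nabla f|^{2}+2\int_{M}\phi f\,\langle\nabla\phi,\nabla f\rangle+\int_{M}f^{2}|\nabla\phi|^{2}.
\]
Multiplying this last inequality by $a$ and subtracting it from the previous one, the terms $a\lambda_{1}\int\phi^{2}f^{2}$ cancel and I am left with
\[
\Bigl(\tfrac{n}{n-1}-a\Bigr)\int_{M}\phi^{2}|\nabla f|^{2}\ \le\ 2|1-a|\int_{M}|\phi f|\,|\nabla\phi|\,|\nabla f|+a\int_{M}f^{2}|\nabla\phi|^{2}.
\]

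Here the hypothesis $a<\tfrac{n}{n-1}$ is used decisively: the coefficient $\tfrac{n}{n-1}-a$ is strictly positive, so after bounding the cross term by Cauchy--Schwarz and absorbing a small multiple of $\int\phi^{2}|\nabla f|^{2}$ via Young's inequality, I conclude
\[
\int_{M}\phi^{2}|\nabla f|^{2}\ \le\ C(n,a)\int_{M}f^{2}|\nabla\phi|^{2}.
\]
Finally I would take $\phi$ equal to $1$ on the geodesic ball $B(R)$, vanishing outside $B(2R)$, with $|\nabla\phi|\le c/R$; since $f=|\omega|\in L^{2}$, the right-hand side tends to $0$ as $R\to\infty$, forcing $\nabla f\equiv0$, so $f$ is constant. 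As $\lambda_{1}(M)>0$ forces $\vol(M)=\infty$ (otherwise the constants would violate the Poincar\'e inequality) while $f\in L^{2}$, this constant must vanish, hence $\omega\equiv0$. The main obstacle is the bookkeeping in combining the two integral inequalities so that precisely the factor $\tfrac{n}{n-1}-a$ survives in front of the good gradient term, together with the technical care at the zero set of $\omega$ needed to legitimize the Bochner--Kato inequality in integral form.
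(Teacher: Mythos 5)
Your argument is correct and is essentially the paper's own proof: it reproduces Lemma \ref{lemma vanish} in the special case $q=\lambda_{1}(M^{n})$, $A=\frac{1}{n-1}$ (Bochner--Weitzenb\"ock plus refined Kato, cutoff $\phi$, the Poincar\'e inequality tested on $\phi f$, and absorption of the cross term using $a<\frac{n}{n-1}$), followed by the same final step that positive spectrum forces infinite volume so the constant $f=|\omega|$ must vanish. The only caveats are standard ones you already flag: justifying the inequality across the zero set of $\omega$ and the infinite-volume fact, both handled exactly as in the paper.
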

The papers \cite{LW2001,LW2002,LW2005,KLZ2008,LW2009} contain several
interesting results concerning the geometry and topology at infinity
of complete Riemannian manifolds with positive spectrum.

For a complete Riemannian manifold $M$ and a continuous function
$q$ on $M$ recall that $M$ satisfies a weighted Poincaré inequality
with weight function $q$ if 
\begin{equation}
\int_{M}q\phi^{2}\leq\int_{M}\left|\nabla\phi\right|^{2}\label{eq:wpi}
\end{equation}
for all functions $\phi$ in $C_{c}^{\infty}\left(M\right)$ \cite{LW2006}.
Taking $q=\lambda_{1}\left(M\right)$ in this definition recovers
the first eigenvalue of the Laplacian.

Li and Wang's theorem was recently generalized by Lam. He proved the
following vanishing theorem for $L^{2}$ harmonic one-forms on complete
Riemannian manifolds with a weighted Poincaré inequality.
\begin{thm*}[Lam \cite{L2010}]
 Suppose a complete Riemannian manifold $M^{n}$ satisfies a weighted
Poincaré inequality with weight function $q$ and the Ricci curvature
satisfies
\[
\ric\geq-aq
\]
for some $0<a<\frac{n}{n-1}$. Assume $q$ positive with growth rate
\[
q\left(x\right)=O\left(\dist\left(x,x_{0}\right)^{2-\alpha}\right)
\]
for some $0<\alpha<2$. Then the space of $L^{2}$ harmonic one-forms
on $M^{n}$ is trivial.
\end{thm*}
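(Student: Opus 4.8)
The plan is to adapt the Bochner technique of Li and Wang, replacing the spectral gap by the weighted Poincar\'e inequality \eqref{eq:wpi}. Let $\omega$ be an $L^{2}$ harmonic one-form on $M^{n}$ and set $f=|\omega|$. On the open set $\{\omega\neq 0\}$ the Bochner formula reads $\frac{1}{2}\Delta|\omega|^{2}=|\nabla\omega|^{2}+\ric(\omega,\omega)$, and combining it with the refined Kato inequality for harmonic one-forms, $|\nabla\omega|^{2}\geq\frac{n}{n-1}|\nabla f|^{2}$, and with the hypothesis $\ric\geq-aq$ gives the differential inequality
\[
f\Delta f\;\geq\;\frac{1}{n-1}|\nabla f|^{2}-aqf^{2}.
\]
To sidestep the zero set of $\omega$ I would work with $f_{\delta}=\sqrt{|\omega|^{2}+\delta^{2}}$, which is smooth, decreases to $f$, and satisfies the same inequality globally (a short computation using the refined Kato inequality together with $q>0$); all estimates below would be run for $f_{\delta}$ and one passes to the limit $\delta\to 0$ at the end, the gradient term surviving by Fatou.

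Next I would test the inequality against $\phi^{2}$ with $\phi\in C_{c}^{\infty}(M)$ and integrate by parts, obtaining
\[
a\int_{M}q\phi^{2}f^{2}\;\geq\;\frac{n}{n-1}\int_{M}\phi^{2}|\nabla f|^{2}+2\int_{M}\phi f\,\langle\nabla\phi,\nabla f\rangle .
\]
Then I would apply \eqref{eq:wpi} to the compactly supported Lipschitz function $\phi f$ (legitimate after mollification, since $q$ is continuous hence bounded on the support), which bounds $\int_{M}q\phi^{2}f^{2}$ by $\int\phi^{2}|\nabla f|^{2}+2\int\phi f\langle\nabla\phi,\nabla f\rangle+\int f^{2}|\nabla\phi|^{2}$. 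Feeding the second estimate into the first makes the weighted term $\int q\phi^{2}f^{2}$ cancel and leaves
\[
\Bigl(\tfrac{n}{n-1}-a\Bigr)\int_{M}\phi^{2}|\nabla f|^{2}+(2-2a)\int_{M}\phi f\,\langle\nabla\phi,\nabla f\rangle\;\leq\;a\int_{M}f^{2}|\nabla\phi|^{2}.
\]
Here the restriction $a<\frac{n}{n-1}$ is decisive: the leading coefficient is positive, so a Cauchy--Schwarz bound on the cross term with a sufficiently small parameter absorbs it and yields the key estimate $\int_{M}\phi^{2}|\nabla f|^{2}\leq C(n,a)\int_{M}f^{2}|\nabla\phi|^{2}$.

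To conclude I would insert cutoffs $\phi_{R}$ equal to $1$ on $B_{R}(x_{0})$, vanishing outside $B_{2R}(x_{0})$, increasing pointwise to $1$, with $|\nabla\phi_{R}|\leq 2/R$. Since $f\in L^{2}(M)$ the right-hand side is at most $\frac{4C}{R^{2}}\int_{M}f^{2}\to 0$, so $\nabla f\equiv 0$ and $f=|\omega|$ equals a constant $c$. If $c=0$ we are done; if $c>0$ then $\vol(M)=c^{-2}\int_{M}|\omega|^{2}<\infty$, and plugging $\phi_{R}$ into \eqref{eq:wpi} gives $\int_{B_{R}}q\leq\int_{M}q\phi_{R}^{2}\leq\int_{M}|\nabla\phi_{R}|^{2}\leq\frac{4}{R^{2}}\vol(M)\to 0$, forcing $\int_{M}q\leq 0$ and contradicting $q>0$. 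Hence $\omega\equiv 0$.

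I expect the genuinely delicate points, rather than the algebra above, to be two. First, the regularization near the zero set of $\omega$: the refined Kato inequality is only available in the stated form where $\omega\neq 0$, and one must verify that the $f_{\delta}$-inequality, its integration by parts, and the limit $\delta\to 0$ all go through cleanly (this is also where positivity of $q$ slips into the Bochner step, and where the subquadratic growth hypothesis on $q$ is convenient for controlling the cutoff integrals in Lam's original treatment). Second, the dichotomy at the end: once $f$ is constant, ruling out a nonzero constant is a separate argument that genuinely uses $q>0$ together with the finiteness of $\vol(M)$ forced by $\omega\in L^{2}$. Everything else is the standard Bochner-plus-Poincar\'e bookkeeping.
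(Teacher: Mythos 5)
Your proposal is correct and follows essentially the same route as the paper: the Bochner--Weitzenb\"ock formula plus the refined Kato inequality give $f\Delta f\geq\frac{1}{n-1}|\nabla f|^{2}-aqf^{2}$ for $f=|\omega|$, and your cutoff/weighted-Poincar\'e absorption argument is exactly the paper's Lemma~\ref{lemma vanish}, which the paper uses (via Theorem~\ref{vanish}) to prove the stronger Theorem~\ref{vanish cor} with no sign or growth hypothesis on $q$. The only differences are cosmetic: you close the argument by contradicting $q>0$ directly (legitimate here, since Lam's statement assumes it), where the paper instead invokes Yau's infinite-volume theorem to dispense with positivity of $q$ altogether, and you make explicit the regularization $f_{\delta}=\sqrt{|\omega|^{2}+\delta^{2}}$ near the zero set (with the correct order of limits, $\delta\to0$ for fixed cutoff before $R\to\infty$), a point the paper glosses over; like the paper, you never need the growth-rate assumption on $q$.
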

Taking $q=\lambda_{1}\left(M\right)$ in this theorem recovers Li
and Wang's theorem.

It is well known that a stable hypersurface $M^{n}$ in a Riemannian
manifold $\overline{M}^{n+1}$ satisfies a weighted Poincaré inequality
with weight function
\[
q=\left|A\right|^{2}+\overline{\ric}\left(\nu,\nu\right),
\]
where $A$ is the second fundamental form and $\overline{\ric}\left(\nu,\nu\right)$
is the Ricci curvature of $\overline{M}^{n+1}$ in the normal direction.
Under certain natural conditions it is possible to show that the Ricci
curvature of $M^{n}$ satisfies
\[
\ric\geq-aq
\]
for some $0<a<\frac{n}{n-1}$ (see Section 3). When Lam's theorem
is applied to $M^{n}$ the assumption of $q$ positive with growth
rate $q\left(x\right)=O\left(\dist\left(x,x_{0}\right)^{2-\alpha}\right)$
for some $0<\alpha<2$ is not a natural condition. This example from
hypersurface theory shows the importance of studying weighted Poincaré
inequalities without assumptions of the weight function.

The following theorem improves Lam's theorem by removing the assumptions
of sign and growth rate of the weight function.
\begin{thm}
\emph{\label{vanish cor}}Suppose a complete non-compact Riemannian
manifold $M^{n}$ satisfies a weighted Poincaré inequality with weight
function $q$ and the Ricci curvature satisfies 
\[
\ric\geq-aq
\]
for some $0<a<\frac{n}{n-1}$. Then the space of $L^{2}$ harmonic
one-forms on $M^{n}$ is trivial.
\end{thm}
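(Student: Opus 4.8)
The plan is to combine the Bochner formula with the weighted Poincar\'e inequality, in the spirit of Li--Wang and Lam, but keeping the weight $q$ entirely on one side so that it can be eliminated before the final cutoff argument. Let $\omega$ be an arbitrary $L^{2}$ harmonic one-form on $M^{n}$; the goal is to show $\omega\equiv0$. Put $u=\left|\omega\right|$. On the open set $\left\{ u>0\right\} $, the Bochner formula for harmonic one-forms $\frac{1}{2}\Delta u^{2}=\left|\nabla\omega\right|^{2}+\ric\left(\omega,\omega\right)$, the refined Kato inequality $\left|\nabla\omega\right|^{2}\geq\frac{n}{n-1}\left|\nabla u\right|^{2}$, and the curvature hypothesis $\ric\geq-aq$ together give the differential inequality
\[
u\,\Delta u\geq\frac{1}{n-1}\left|\nabla u\right|^{2}-aq\,u^{2}.
\]
To handle the zero set of $\omega$ I would run the argument with $u_{\varepsilon}=\sqrt{\left|\omega\right|^{2}+\varepsilon^{2}}$ in place of $u$ and pass to the limit $\varepsilon\to0$ at the end; this is routine here and I will not dwell on it.

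Fix a point $x_{0}\in M$ and a cutoff $\phi\in C_{c}^{\infty}\left(M\right)$. Multiplying the differential inequality by $\phi^{2}$, integrating over $M$, and integrating by parts gives
\[
a\int_{M}q\,\phi^{2}u^{2}\geq\frac{n}{n-1}\int_{M}\phi^{2}\left|\nabla u\right|^{2}+2\int_{M}\phi\,u\left\langle \nabla\phi,\nabla u\right\rangle .
\]
Applying the weighted Poincar\'e inequality \eqref{eq:wpi} to the Lipschitz function $\phi u$ and expanding $\left|\nabla\left(\phi u\right)\right|^{2}$ produces an upper bound for $\int_{M}q\,\phi^{2}u^{2}$. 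Combining the two inequalities, the term $\int_{M}q\,\phi^{2}u^{2}$ cancels, and using $0<a<\frac{n}{n-1}$ (so $\frac{n}{n-1}-a>0$) together with a Cauchy--Schwarz estimate with a small parameter to absorb the cross term $\int_{M}\phi\,u\left\langle \nabla\phi,\nabla u\right\rangle $, I expect to reach
\[
\int_{M}\phi^{2}\left|\nabla u\right|^{2}\leq C\int_{M}u^{2}\left|\nabla\phi\right|^{2},
\]
with $C=C\left(n,a\right)$ and, crucially, no appearance of $q$.

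Now choose $\phi=\phi_{R}$ equal to $1$ on the geodesic ball $B_{R}\left(x_{0}\right)$, supported in $B_{2R}\left(x_{0}\right)$, with $\left|\nabla\phi_{R}\right|\leq C/R$. Since $u=\left|\omega\right|\in L^{2}\left(M\right)$, the right-hand side is at most $\frac{C}{R^{2}}\int_{M\setminus B_{R}\left(x_{0}\right)}u^{2}$, which tends to $0$ as $R\to\infty$; hence $\nabla u\equiv0$ and $\left|\omega\right|$ equals a constant $c\geq0$. If $c=0$ then $\omega\equiv0$ and the proof is finished. Suppose $c>0$. Then $\omega$ is nowhere zero and the Bochner formula with $\nabla u\equiv0$ becomes $0=\left|\nabla\omega\right|^{2}+\ric\left(\omega,\omega\right)\geq\left|\nabla\omega\right|^{2}-aq\,c^{2}$, which forces $q\geq0$ on all of $M$; moreover $\omega\in L^{2}$ together with $\left|\omega\right|\equiv c>0$ forces $\vol\left(M\right)<\infty$. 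Feeding $\phi_{R}$ into \eqref{eq:wpi} (applied to $c\,\phi_{R}$) and using $\vol\left(M\right)<\infty$ gives $\int_{M}q\,\phi_{R}^{2}\leq\int_{M}\left|\nabla\phi_{R}\right|^{2}\to0$, so by monotone convergence $\int_{M}q=0$ and thus $q\equiv0$; hence $\ric\geq0$. But a complete non-compact manifold with $\ric\geq0$ has infinite volume by a theorem of Calabi and Yau, contradicting $\vol\left(M\right)<\infty$. Therefore $c=0$, i.e. $\omega\equiv0$.

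The step that needs the most care is the last one, where the nonzero-constant case is ruled out: this is exactly the point at which the non-compactness hypothesis enters (on a flat torus every other hypothesis is satisfied, yet there are parallel harmonic one-forms), and the argument above funnels it through the sign of $q$ forced by the Bochner formula and a volume growth theorem. The remaining things to verify are that the differential inequality and the integrations by parts are valid for the merely Lipschitz function $u$ --- which is taken care of by the $\varepsilon$-regularization and a limiting argument --- and the bookkeeping in the Cauchy--Schwarz step that produces the constant $C\left(n,a\right)$.
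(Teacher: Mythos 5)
Your argument is correct and is essentially the paper's own proof: the cutoff computation combining the Bochner--Kato differential inequality for $\left|\omega\right|$ with the weighted Poincar\'e inequality applied to $\phi\left|\omega\right|$ is exactly Lemma \ref{lemma vanish} (with $A=\frac{1}{n-1}$), and the endgame forcing $q\geq0$, $q\equiv0$, finite volume, and then a contradiction with Yau's infinite-volume theorem for complete non-compact manifolds of non-negative Ricci curvature is precisely how the paper deduces Theorem \ref{vanish cor} from Theorem \ref{vanish}. The only cosmetic difference is that you inline the lemma and carry out the $\varepsilon$-regularization of $\left|\omega\right|$ explicitly, which if anything is slightly more careful than the paper's direct application of the lemma to the Lipschitz function $\left|\omega\right|$.
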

This theorem will be used to prove the vanishing of $L^{2}$ harmonic
one-forms on complete stable minimal hypersurfaces in Riemannian manifolds
with non-negative Bi-Ricci$^{a}$ curvature (Theorem \ref{vanish applic}).
When the Riemannian manifold has dimension at most $7$ a similar
result holds for hypersurfaces not necessarily minimal (Theorem \ref{vanish applic 2}).

The following theorem shows that Theorem \ref{vanish cor} holds with
a more general lower bound of the Ricci curvature when the first eigenvalue
of the Laplacian satisfies a certain lower bound.
\begin{thm}
\label{lambda cor}Suppose a complete Riemannian manifold $M^{n}$
satisfies a weighted Poincaré inequality with weight function $q$
and the Ricci curvature satisfies 
\[
\ric\geq-aq-b
\]
for some $0<a<\frac{n}{n-1}$ and $b>0$. Assume the first eigenvalue
of the Laplacian satisfies
\[
\lambda_{1}\left(M^{n}\right)>\frac{b}{\frac{n}{n-1}-a}.
\]
Then the space of $L^{2}$ harmonic one-forms on $M^{n}$ is trivial.
\end{thm}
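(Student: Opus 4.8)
The plan is to reduce Theorem \ref{lambda cor} to Theorem \ref{vanish cor} by replacing the weight $q$ with a new weight obtained by mixing $q$ with the constant weight $\lambda_1(M^n)$. First I would record two preliminary observations. The hypothesis $\lambda_1(M^n)>b/(\frac{n}{n-1}-a)>0$ forces $M^n$ to be non-compact: on a compact manifold $C_c^\infty(M^n)=C^\infty(M^n)$, and testing the definition of $\lambda_1$ against a constant function gives $\lambda_1(M^n)=0$. Secondly, by the very definition of $\lambda_1(M^n)$ one has $\lambda_1(M^n)\int_M\phi^2\le\int_M|\nabla\phi|^2$ for all $\phi\in C_c^\infty(M^n)$, i.e. $M^n$ satisfies the weighted Poincar\'e inequality \eqref{eq:wpi} with the constant weight $\lambda_1(M^n)$ as well as with $q$.

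Since $\lambda_1(M^n)>b/(\frac{n}{n-1}-a)$, I can choose $a'$ with $a<a'<\frac{n}{n-1}$ and $(a'-a)\lambda_1(M^n)\ge b$; this is possible because $b/(a'-a)\to b/(\frac{n}{n-1}-a)$ as $a'\uparrow\frac{n}{n-1}$. Set $t:=a/a'\in(0,1]$ and define $\tilde q:=t\,q+(1-t)\,\lambda_1(M^n)$. A convex combination of weight functions each satisfying \eqref{eq:wpi} again satisfies \eqref{eq:wpi}, so $M^n$ satisfies a weighted Poincar\'e inequality with weight $\tilde q$. Moreover, since $a't=a$ and $a'(1-t)=a'-a$, we have $a'\tilde q=aq+(a'-a)\lambda_1(M^n)\ge aq+b$, hence
\[
\ric\ge -aq-b\ge -a'\tilde q ,
\]
with $0<a'<\frac{n}{n-1}$. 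Therefore $M^n$ is a complete non-compact Riemannian manifold satisfying a weighted Poincar\'e inequality with weight $\tilde q$ and $\ric\ge -a'\tilde q$ for some $0<a'<\frac{n}{n-1}$, so Theorem \ref{vanish cor} applies and the space of $L^2$ harmonic one-forms on $M^n$ is trivial.

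There is essentially no obstacle beyond this bookkeeping; the only place where the precise numerical threshold is used is the existence of an admissible exponent $a'<\frac{n}{n-1}$ with $(a'-a)\lambda_1(M^n)\ge b$, which is exactly the content of the strict inequality $\lambda_1(M^n)>b/(\frac{n}{n-1}-a)$. For completeness I would note that one can also argue directly in the spirit of Li--Wang and Lam: writing $u=|\omega|$ for an $L^2$ harmonic one-form $\omega$, the Bochner formula together with the refined Kato inequality gives $u\,\Delta u\ge\frac{1}{n-1}|\nabla u|^2-(aq+b)u^2$ in the weak sense; integrating this against $\phi^2$ and estimating $\int q\phi^2u^2$ via \eqref{eq:wpi} and $\int\phi^2u^2$ via the definition of $\lambda_1(M^n)$, both applied to the test function $\phi u$, leads after a Cauchy--Schwarz absorption of the cross term to $\bigl(\frac{n}{n-1}-a-\frac{b}{\lambda_1(M^n)}\bigr)\int\phi^2|\nabla u|^2\le C\int u^2|\nabla\phi|^2$; choosing the usual cutoffs $\phi_R$ and letting $R\to\infty$ forces $u$ to be constant, and positive spectrum (hence infinite volume) forces $u\equiv 0$. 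The reduction above simply packages this computation cleanly.
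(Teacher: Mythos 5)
Your reduction is correct, and it is a genuinely different route from the paper's. The paper does not deduce Theorem \ref{lambda cor} from Theorem \ref{vanish cor}; instead it proves a quantitative integral estimate (Lemma \ref{lemma lambda}: $\int_M|\nabla f|^2\leq\frac{b}{1+A-a}\int_M f^2$ for functions satisfying $f\Delta f\geq A|\nabla f|^2-aqf^2-bf^2$), combines it with the elementary bound $\lambda_1(M)\int_M f^2\leq\int_M|\nabla f|^2$ (Lemma \ref{lemma lambda 2}), and derives the contradiction $\lambda_1\leq\frac{b}{C_{n,k}-a}$ for $f=|\omega|$ coming from Bochner plus refined Kato; Theorem \ref{lambda cor} is then the case $k=1$ of Theorem \ref{lambda}. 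Your argument instead absorbs the constant $b$ into the weight: since $\lambda_1(M^n)$ itself is an admissible (constant) weight, the convex combination $\tilde q=tq+(1-t)\lambda_1(M^n)$ with $t=a/a'$ and $a'=a+b/\lambda_1(M^n)<\frac{n}{n-1}$ satisfies the weighted Poincar\'e inequality and $\ric\geq-aq-b\geq-a'\tilde q$, and the non-compactness needed in Theorem \ref{vanish cor} follows from $\lambda_1(M^n)>0$; all of these verifications are sound, and the same trick would even recover Theorem \ref{lambda} for general $k$ via Theorem \ref{vanish}, since positive spectrum gives infinite volume. What your shortcut buys is brevity and the elimination of Lemmas \ref{lemma lambda} and \ref{lemma lambda 2}; what the paper's longer route buys is the equality analysis in Lemma \ref{lemma lambda}, which is exactly what drives the rigidity/splitting result in Theorem \ref{split} when $\lambda_1(M^n)=\frac{b}{\frac{n}{n-1}-a}$, information your reduction does not produce. (Your closing sketch of the direct argument is essentially the paper's proof in compressed form.)
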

This theorem can be used as following: with the same assumption of
the Ricci curvature if the space of $L^{2}$ harmonic one-forms is
non-trivial then the first eigenvalue of the Laplacian satisfies $\lambda_{1}\left(M^{n}\right)\leq\frac{b}{\frac{n}{n-1}-a}$.

This theorem will be used to prove the vanishing of $L^{2}$ harmonic
one-forms on complete stable minimal hypersurfaces with a certain
lower bound of the first eigenvalue of the Laplacian in Riemannian
manifolds with $\overline{\biric^{a}}\geq-b$ (Theorem \ref{lambda applic}).
When the Riemannian manifold has dimension at most $7$ a similar
result holds for hypersurfaces not necessarily minimal (Theorem \ref{lambda applic 2}).
In particular, this gives an explicit upper bound of the first eigenvalue
of the Laplacian of minimal and non-minimal stable hypersurfaces with
a non-trivial space of $L^{2}$ harmonic one-forms in the hyperbolic
space (Corollary \ref{lambda applic cor} and Corollary \ref{lambda applic 2 cor}).

This paper also contains the following ridigity result: in Theorem
\ref{lambda cor} with the same assumption of the Ricci curvature
if $\lambda_{1}\left(M^{n}\right)=\frac{b}{\frac{n}{n-1}-a}$ and
the space of $L^{2}$ harmonic one-forms is non-trivial then the universal
cover of $M^{n}$ splits (Theorem \ref{split}).

The main difference between this paper and previous works is that
the results here hold without assumptions of the weight function.
Two important advances are Theorem \ref{vanish cor} which improves
Lam's theorem and Theorem \ref{lambda cor} which proves the vanishing
of $L^{2}$ harmonic forms for more general lower bounds of the Ricci
curvature. Another novelty is the possibility of applying results
from the theory of Riemannian manifolds with a weighted Poincaré inequality
to the theory of stable hypersurfaces, which makes the proofs of many
results short and clear.

As in previous works the proofs of the vanishing theorems of this
paper begin with the well known Bochner-Weitzenböck formula. The key
differences here are Lemma \ref{lemma vanish} and Lemma \ref{lemma lambda},
which may be of independent interest. The proofs of the theorems for
stable hypersurfaces use the following simple idea: a stable hypersurface
satisfies a weighted Poincaré inequality, so to apply Theorem \ref{vanish cor}
or Theorem \ref{lambda cor} to the hypersurface it suffices to show
that the Ricci curvature of the hypersurface satisfies a certain lower
bound.

Notice that Theorem \ref{vanish cor} and Theorem \ref{lambda cor}
are actually corollaries of vanishing theorems for harmonic forms
of any degree (Theorem \ref{vanish} and Theorem \ref{lambda} respectively).

This work is part of the author's Ph.D. thesis, written under the
supervision of Detang Zhou at Universidade Federal Fluminense.

\section{Vanishing of $L^{2}$ harmonic forms}

For a Riemannian manifold $M$ the Hodge Laplacian is defined by
\[
\Delta=-\left(dd^{*}+d^{*}d\right).
\]
The space of $L^{2}$ harmonic $k$-forms is the set of all $k$-forms
$\omega$ on $M$ such that
\[
\Delta\omega=0
\]
and
\[
\int_{M}\left|\omega\right|^{2}<\infty.
\]
For a local orthonormal frame $e_{1},\dots,e_{n}$ on $M$ with dual
coframe $e^{1},\dots,e^{n}$ the curvature operator acting on forms
is defined by 
\[
\mathcal{R}=\sum_{i,j}e^{i}\wedge\iota_{e_{j}}R\left(e_{i},e_{j}\right)
\]
with $R\left(e_{i},e_{j}\right)=\nabla_{i}\nabla_{j}-\nabla_{j}\nabla_{i}$.
For a $k$-form $\omega$ on $M$ the following identity holds
\begin{equation}
\frac{1}{2}\Delta\left|\omega\right|^{2}=\left|\nabla\omega\right|^{2}+\left\langle \Delta\omega,\omega\right\rangle +\left\langle \mathcal{R}\omega,\omega\right\rangle .\label{eq:bochner formula}
\end{equation}
This identity is known as the Bochner-Weitzenböck formula \cite[Lemma 3.4]{LBOOK}.
For a closed and co-closed $k$-form $\omega$ on $M^{n}$ the following
inequality holds 
\begin{equation}
\left|\nabla\omega\right|^{2}\geq C_{n,k}\left|\nabla\left|\omega\right|\right|^{2}\label{eq:des kato}
\end{equation}
with
\[
C_{n,k}=\begin{cases}
1+\frac{1}{n-k}, & 1\leq k\leq n/2,\\
1+\frac{1}{k}, & n/2\leq k\leq n-1.
\end{cases}
\]
This inequality is known as the refined Kato inequality for $L^{2}$
harmonic forms \cite{CGH2000,CZ2012}.

The proof of Theorem \ref{vanish} relies on the Bochner-Weitzenböck
formula, the refined Kato inequality for $L^{2}$ harmonic forms and
the following lemma, which may be of independent interest.
\begin{lem}
\label{lemma vanish}Suppose a complete Riemannian manifold $M$ satisfies
a weighted Poincaré inequality with weight function $q$. Assume a
smooth function $f$ on $M$ satisfies 
\begin{equation}
f\Delta f\geq A\left|\nabla f\right|^{2}-aqf^{2}\label{eq: lemma vanish 1}
\end{equation}
and 
\[
\int_{M}f^{2}<\infty
\]
for some $0<a<1+A$. Then $f$ is constant. Moreover, if $f$ is not
identically zero then the volume of $M$ is finite and $q$ is identically
zero.\end{lem}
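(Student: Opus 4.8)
The plan is to run the standard Bochner-type cutoff argument, with the weighted Poincar\'e inequality playing the role usually taken by a Sobolev or spectral bound. Fix $x_{0}\in M$ and, for $R>0$, choose a cutoff function $\phi=\phi_{R}$ with $\phi\equiv1$ on $B\left(x_{0},R\right)$, $\phi\equiv0$ outside $B\left(x_{0},2R\right)$, $0\le\phi\le1$, and $\left|\nabla\phi\right|\le C/R$ for a constant $C$ independent of $R$; we may take $\phi$ smooth, so that $\phi f\in C_{c}^{\infty}\left(M\right)$ since $f$ is smooth.

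First I would integrate the differential inequality (\ref{eq: lemma vanish 1}) against $\phi^{2}$. Since $\phi^{2}f$ has compact support, integration by parts gives
\[
\int_{M}\phi^{2}f\Delta f=-\int_{M}\phi^{2}\left|\nabla f\right|^{2}-2\int_{M}\phi f\left\langle \nabla\phi,\nabla f\right\rangle ,
\]
whereas multiplying (\ref{eq: lemma vanish 1}) by $\phi^{2}$ and integrating yields $\int_{M}\phi^{2}f\Delta f\ge A\int_{M}\phi^{2}\left|\nabla f\right|^{2}-a\int_{M}q\phi^{2}f^{2}$; together these give
\[
a\int_{M}q\phi^{2}f^{2}\ge\left(1+A\right)\int_{M}\phi^{2}\left|\nabla f\right|^{2}+2\int_{M}\phi f\left\langle \nabla\phi,\nabla f\right\rangle .
\]
Applying the weighted Poincar\'e inequality (\ref{eq:wpi}) to $\phi f$ gives
\[
\int_{M}q\phi^{2}f^{2}\le\int_{M}\left|\nabla\phi\right|^{2}f^{2}+2\int_{M}\phi f\left\langle \nabla\phi,\nabla f\right\rangle +\int_{M}\phi^{2}\left|\nabla f\right|^{2}.
\]
The crucial point is that the possibly sign-indefinite quantity $a\int_{M}q\phi^{2}f^{2}$ cancels when the last two displays are combined, leaving
\[
\left(1+A-a\right)\int_{M}\phi^{2}\left|\nabla f\right|^{2}\le a\int_{M}\left|\nabla\phi\right|^{2}f^{2}+2\left(a-1\right)\int_{M}\phi f\left\langle \nabla\phi,\nabla f\right\rangle ,
\]
so no sign assumption on $q$ is used here; this is precisely how the positivity hypothesis in Lam's theorem gets dispensed with.

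Next I would absorb the cross term by Young's inequality: for any $\delta>0$,
\[
2\left|a-1\right|\int_{M}\left|\phi f\right|\left|\nabla\phi\right|\left|\nabla f\right|\le\delta\int_{M}\phi^{2}\left|\nabla f\right|^{2}+\frac{\left(a-1\right)^{2}}{\delta}\int_{M}f^{2}\left|\nabla\phi\right|^{2}.
\]
Since $a<1+A$ I may choose $\delta\in\left(0,1+A-a\right)$, and then
\[
\left(1+A-a-\delta\right)\int_{M}\phi^{2}\left|\nabla f\right|^{2}\le\left(a+\frac{\left(a-1\right)^{2}}{\delta}\right)\int_{M}f^{2}\left|\nabla\phi\right|^{2}\le\frac{C'}{R^{2}}\int_{M}f^{2}.
\]
Letting $R\to\infty$ and using $\int_{M}f^{2}<\infty$ forces $\int_{M}\left|\nabla f\right|^{2}=0$, hence $f$ is constant. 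The hypothesis $a<1+A$ is used exactly here, to keep the left-hand coefficient positive after absorbing the cross term; arranging this is the one genuinely load-bearing point, though it is routine.

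Finally, suppose $f\equiv c\ne0$. Then $c^{2}\vol\left(M\right)=\int_{M}f^{2}<\infty$, so $\vol\left(M\right)<\infty$. Substituting $\nabla f=0$ and $\Delta f=0$ into (\ref{eq: lemma vanish 1}) gives $0\ge-aqc^{2}$, hence $q\ge0$ everywhere since $a>0$ and $c\ne0$. Applying (\ref{eq:wpi}) to $\phi_{R}f=c\phi_{R}$ gives $\int_{M}q\phi_{R}^{2}\le\int_{M}\left|\nabla\phi_{R}\right|^{2}\le CR^{-2}\vol\left(M\right)\to0$ as $R\to\infty$; since $q\ge0$ and $\phi_{R}\nearrow1$, monotone convergence yields $\int_{M}q=0$, and continuity of $q$ then forces $q\equiv0$.
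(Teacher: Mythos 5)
Your proof is correct and follows essentially the same route as the paper: integrate the differential inequality against $\phi^{2}$, apply the weighted Poincar\'e inequality to $\phi f$ so that the sign-indefinite term $a\int_{M}q\phi^{2}f^{2}$ cancels, absorb the cross term by Young's inequality using $a<1+A$, and let $R\to\infty$; the finite-volume and $q\equiv0$ conclusions are also obtained exactly as in the paper. No gaps.
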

\begin{proof}
First we prove that $f$ is constant. Take a cutoff function $\phi$
on $M$ such that 
\[
\phi=\begin{cases}
1 & \text{in }B_{R},\\
0 & \text{in }M\setminus B_{2R},
\end{cases}
\]
\[
0\leq\phi\leq1\,\,\,\,\,\text{in }B_{2R}\setminus B_{R}
\]
and
\[
\left|\nabla\phi\right|\leq\frac{2}{R}\,\,\,\,\,\text{in }B_{2R}\setminus B_{R}.
\]
Here $B_{R}$ is the open ball with center at a fixed point of $M$
and radius $R$. Multiplying inequality (\ref{eq: lemma vanish 1})
by $\phi^{2}$ and integrating by parts gives
\[
\left(1+A\right)\int_{M}\left|\nabla f\right|^{2}\phi^{2}\leq a\int_{M}qf^{2}\phi^{2}-2\int_{M}f\phi\left\langle \nabla f,\nabla\phi\right\rangle .
\]
Putting $f\phi$ in the weighted Poincaré inequality (\ref{eq:wpi})
yields
\[
\int_{M}q\left(f\phi\right)^{2}\leq\int_{M}f^{2}\left|\nabla\phi\right|^{2}+\int_{M}\left|\nabla f\right|^{2}\phi^{2}+2\int_{M}f\phi\left\langle \nabla f,\nabla\phi\right\rangle .
\]
Combining these two inequalities gives
\[
\left(1+A-a\right)\int_{M}\left|\nabla f\right|^{2}\phi^{2}\leq a\int_{M}f^{2}\left|\nabla\phi\right|^{2}+2\left(a-1\right)\int_{M}f\phi\left\langle \nabla f,\nabla\phi\right\rangle .
\]
Fix $\epsilon>0$. By Cauchy\textendash{}Schwarz and Young's inequalities
we have
\[
\left(1+A-a-\epsilon\left|a-1\right|\right)\int_{M}\left|\nabla f\right|^{2}\phi^{2}\leq\left(a+\frac{\left|a-1\right|}{\epsilon}\right)\int_{M}f^{2}\left|\nabla\phi\right|^{2}.
\]
Note that $1+A-a-\epsilon\left|a-1\right|>0$ for a sufficiently small
$\epsilon>0$. Since $\int_{M}f^{2}<\infty$ sending $R\to\infty$
and using the monotone convergence theorem gives
\[
\left(1+A-a-\epsilon\left|a-1\right|\right)\int_{M}\left|\nabla f\right|^{2}\leq0.
\]
This proves that $f$ is constant.

Now we prove that if $f$ is not identically zero then the volume
of $M$ is finite and $q$ is identically zero. Assume $f\neq0$.
Then the volume of $M$ is finite because
\[
\vol\left(M\right)=\frac{\int_{M}f^{2}}{f^{2}}<\infty.
\]
Putting $f$ in inequality (\ref{eq: lemma vanish 1}) shows that
$aqf^{2}\geq0$, which implies that $q\geq0$. Putting $\phi$ in
the weighted Poincaré inequality (\ref{eq:wpi}) gives
\[
\int_{M}q\phi^{2}\leq\frac{4\vol\left(M\right)}{R^{2}}.
\]
Sending $R\to\infty$ and using the monotone convergence theorem yields
\[
\int_{M}q\leq0.
\]
This proves that $q$ is identically zero.
\end{proof}
We prove the following vanishing theorem for $L^{2}$ harmonic $k$-forms
on complete Riemannian manifolds with a weighted Poincaré inequality.
\begin{thm}
\label{vanish}Suppose a complete Riemannian manifold $M^{n}$ satisfies
a weighted Poincaré inequality with weight function $q$ and the curvature
operator satisfies
\[
\left\langle \mathcal{R}\omega,\omega\right\rangle \geq-aq\left|\omega\right|^{2}
\]
for all $k$-forms $\omega$ on $M^{n}$, where $0<a<C_{n,k}$. Assume
that at least one of the following conditions hold: (1) the volume
of $M^{n}$ is infinite; (2) $q$ is not identically zero. Then the
space of $L^{2}$ harmonic $k$-forms on $M^{n}$ is trivial.\end{thm}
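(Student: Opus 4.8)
The plan is to apply Lemma \ref{lemma vanish} to $f=|\omega|$, where $\omega$ is an $L^{2}$ harmonic $k$-form on $M^{n}$. First I would recall the standard fact that on a complete manifold every $L^{2}$ harmonic form is closed and co-closed: multiplying $\langle\Delta\omega,\omega\rangle$ by the square of a cutoff function, integrating by parts, and letting the cutoff exhaust $M^{n}$ gives $\int_{M}|d\omega|^{2}+\int_{M}|d^{*}\omega|^{2}=0$, so $d\omega=0$ and $d^{*}\omega=0$. This is exactly what is needed for the refined Kato inequality (\ref{eq:des kato}) to apply to $\omega$.

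Next, on the open set $\{\omega\neq0\}$, where $f=|\omega|$ is smooth, I would combine the Bochner-Weitzenb\"ock formula (\ref{eq:bochner formula}) (with $\Delta\omega=0$) with the identity $\tfrac{1}{2}\Delta f^{2}=f\Delta f+|\nabla f|^{2}$ to get $f\Delta f=|\nabla\omega|^{2}-|\nabla f|^{2}+\langle\mathcal{R}\omega,\omega\rangle$. Inserting the refined Kato inequality $|\nabla\omega|^{2}\geq C_{n,k}|\nabla f|^{2}$ and the curvature hypothesis $\langle\mathcal{R}\omega,\omega\rangle\geq-aqf^{2}$ gives
\[
f\Delta f\geq(C_{n,k}-1)|\nabla f|^{2}-aqf^{2}.
\]
This is inequality (\ref{eq: lemma vanish 1}) with $A=C_{n,k}-1$, and the hypothesis $0<a<C_{n,k}$ is precisely $0<a<1+A$. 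Since $\int_{M}f^{2}=\int_{M}|\omega|^{2}<\infty$, Lemma \ref{lemma vanish} gives that $f$ is constant and, if $f$ is not identically zero, that $\vol(M^{n})<\infty$ and that $q$ is identically zero.

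To finish, suppose for contradiction that $\omega$ is not identically zero. Then $f=|\omega|$ is a nonzero constant, so Lemma \ref{lemma vanish} forces both $\vol(M^{n})<\infty$ and $q\equiv0$. The first contradicts assumption (1) and the second contradicts assumption (2); since at least one of (1) and (2) is assumed to hold, this is impossible. Hence $\omega\equiv0$, and the space of $L^{2}$ harmonic $k$-forms on $M^{n}$ is trivial.

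The one delicate point — and the step I expect to be the main obstacle — is that $f=|\omega|$ is only Lipschitz across the zero set of $\omega$, so Lemma \ref{lemma vanish} does not apply to it verbatim. I would handle this in the usual way: either carry out the cutoff computation of Lemma \ref{lemma vanish} directly on the open set $\{\omega\neq0\}$, checking that the contributions near its boundary are harmless because $|\nabla f|\leq|\nabla\omega|$ there and $\omega$ is smooth, or replace $f$ by $f_{\epsilon}=\sqrt{|\omega|^{2}+\epsilon^{2}}$, derive the analogue of the displayed differential inequality for $f_{\epsilon}$ with an error that vanishes as $\epsilon\to0$, and pass to the limit. Apart from this regularization, the proof is a routine assembly of the Bochner-Weitzenb\"ock formula, the refined Kato inequality, and Lemma \ref{lemma vanish}.
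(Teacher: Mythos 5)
Your proposal is correct and follows essentially the same route as the paper: Bochner--Weitzenb\"ock plus the refined Kato inequality to obtain $f\Delta f\geq(C_{n,k}-1)|\nabla f|^{2}-aqf^{2}$ for $f=|\omega|$, then Lemma \ref{lemma vanish} with $A=C_{n,k}-1$ and a contradiction with hypothesis (1) or (2). Your extra paragraph on the regularity of $|\omega|$ across the zero set (e.g.\ via $f_{\epsilon}=\sqrt{|\omega|^{2}+\epsilon^{2}}$) is a standard technical point that the paper passes over silently, and your treatment of it is appropriate.
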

\begin{proof}
Fix a $L^{2}$ harmonic $k$-form $\omega$ on $M^{n}$. By Bochner-Weitzenböck
formula (\ref{eq:bochner formula}) we have 
\[
\left|\omega\right|\Delta\left|\omega\right|=\left|\nabla\omega\right|^{2}-\left|\nabla\left|\omega\right|\right|^{2}+\left\langle \mathcal{R}\omega,\omega\right\rangle .
\]
Since $L^{2}$ harmonic forms are closed and co-closed it follows
from the refined Kato inequality for $L^{2}$ harmonic forms (\ref{eq:des kato})
that
\[
f\Delta f\geq\left(C_{n,k}-1\right)\left|\nabla f\right|^{2}-aqf^{2},
\]
where $f=\left|\omega\right|$. Applying Lemma \ref{lemma vanish}
with $A=C_{n,k}-1$ shows that $f$ is constant. If $f$ is not identically
zero it follows from Lemma \ref{lemma vanish} that the volume of
$M^{n}$ is finite and $q$ is identically zero, a contradiction.
This proves that $\omega$ is identically zero.
\end{proof}
Applying Theorem \ref{vanish} with $q=\lambda_{1}\left(M^{n}\right)$
and using the fact that any complete manifold with positive spectrum
has infinite volume proves the following corollary.
\begin{cor}
If a complete Riemannian manifold $M^{n}$ has positive spectrum $\lambda_{1}\left(M^{n}\right)>0$
and the curvature operator satisfies
\[
\left\langle \mathcal{R}\omega,\omega\right\rangle \geq-a\lambda_{1}\left(M^{n}\right)\left|\omega\right|^{2}
\]
for all $k$-forms $\omega$ on $M^{n}$, where $0<a<C_{n,k}$, then
the space of $L^{2}$ harmonic $k$-forms on $M^{n}$ is trivial.
\end{cor}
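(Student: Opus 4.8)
The plan is to feed the hypotheses directly into Theorem \ref{vanish} with the constant weight function $q\equiv\lambda_{1}\left(M^{n}\right)$. First I would observe that, by the variational characterization of $\lambda_{1}\left(M^{n}\right)$, the inequality $\lambda_{1}\left(M^{n}\right)\int_{M}\phi^{2}\leq\int_{M}\left|\nabla\phi\right|^{2}$ holds for every $\phi\in C_{c}^{\infty}\left(M^{n}\right)$; in other words $M^{n}$ satisfies a weighted Poincar\'e inequality with weight function $q=\lambda_{1}\left(M^{n}\right)$. The curvature assumption $\left\langle \mathcal{R}\omega,\omega\right\rangle \geq-a\lambda_{1}\left(M^{n}\right)\left|\omega\right|^{2}$ is then exactly $\left\langle \mathcal{R}\omega,\omega\right\rangle \geq-aq\left|\omega\right|^{2}$, with $0<a<C_{n,k}$ by hypothesis.

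Next I would check that one of the two alternative conditions in the statement of Theorem \ref{vanish} holds. Since $\lambda_{1}\left(M^{n}\right)>0$, the weight function $q=\lambda_{1}\left(M^{n}\right)$ is a positive constant, hence is not identically zero, so condition (2) is satisfied. (Equivalently, one may verify condition (1): a complete manifold with $\lambda_{1}\left(M^{n}\right)>0$ has infinite volume. Indeed, if $\vol\left(M^{n}\right)<\infty$, then plugging the cutoff functions $\phi$ used in the proof of Lemma \ref{lemma vanish} into the definition of $\lambda_{1}\left(M^{n}\right)$ gives $\lambda_{1}\left(M^{n}\right)\vol\left(B_{R}\right)\leq\int_{M}\lambda_{1}\left(M^{n}\right)\phi^{2}\leq\int_{M}\left|\nabla\phi\right|^{2}\leq 4\vol\left(M^{n}\right)/R^{2}\to 0$ as $R\to\infty$, forcing $\lambda_{1}\left(M^{n}\right)=0$, a contradiction.)

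With the weighted Poincar\'e inequality, the curvature bound, and one of conditions (1), (2) all in hand, Theorem \ref{vanish} applies verbatim and yields that the space of $L^{2}$ harmonic $k$-forms on $M^{n}$ is trivial. I do not expect any real obstacle here: the corollary is just the specialization of Theorem \ref{vanish} to a constant weight, and the only point deserving a word of justification is the passage from $\lambda_{1}\left(M^{n}\right)>0$ to one of the two alternative hypotheses of that theorem, which as noted above is immediate.
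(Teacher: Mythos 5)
Your proposal is correct and follows essentially the same route as the paper: specialize Theorem \ref{vanish} to the constant weight $q=\lambda_{1}\left(M^{n}\right)$, the weighted Poincar\'e inequality being just the variational characterization of $\lambda_{1}$. The only cosmetic difference is that the paper checks alternative (1) by invoking the fact that positive spectrum forces infinite volume, whereas you check alternative (2) directly (the constant weight $\lambda_{1}\left(M^{n}\right)>0$ is not identically zero), which works just as well and is, if anything, simpler; your parenthetical infinite-volume argument is also sound.
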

Taking $k=1$ in this corollary recovers Li and Wang's theorem in
the introduction because for one-forms $\left\langle \mathcal{R}\omega,\omega\right\rangle =\ric\left(\omega,\omega\right)$
and $C_{n,1}=\frac{n}{n-1}$.

We can now prove Theorem \ref{vanish cor}.
\begin{proof}[Proof of Theorem \ref{vanish cor}]
For one-forms $\left\langle \mathcal{R}\omega,\omega\right\rangle =\ric\left(\omega,\omega\right)$
and $C_{n,1}=\frac{n}{n-1}$. Suppose the space of $L^{2}$ harmonic
one-forms is non-trivial. Then by Theorem \ref{vanish} the manifold
has finite volume and non-negative Ricci curvature. However Yau proved
that any complete non-compact Riemannian manifold with non-negative
Ricci curvature has infinite volume \cite{Y1976}. This proves that
the space of $L^{2}$ harmonic one-forms is trivial.
\end{proof}
The proofs of Theorem \ref{lambda} and Theorem \ref{split} rely
on the following lemma, which may be of independent interest.
\begin{lem}
\label{lemma lambda}Suppose a complete Riemannian manifold $M$ satisfies
a weighted Poincaré inequality with weight function $q$. Assume a
smooth function $f$ on $M$ satisfies
\begin{equation}
f\Delta f\geq A\left|\nabla f\right|^{2}-aqf^{2}-bf^{2}\label{eq: lemma vanish 2}
\end{equation}
and 
\[
\int_{M}f^{2}<\infty
\]
for some $0<a<1+A$ and $b>0$. Then
\begin{equation}
\int_{M}\left|\nabla f\right|^{2}\leq\frac{b}{1+A-a}\int_{M}f^{2}.\label{eq: lemma vanish 2-1}
\end{equation}
Moreover, if equality holds in (\ref{eq: lemma vanish 2-1}) then
equality holds in (\ref{eq: lemma vanish 2}).\end{lem}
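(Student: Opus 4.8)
The plan is to run the argument from the proof of Lemma~\ref{lemma vanish} essentially verbatim, carrying the extra term $-bf^{2}$ along to the very end, and then to read off the equality case by re-organizing the estimate around a nonnegative pointwise quantity rather than through the $\epsilon$-trick.

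For the inequality (\ref{eq: lemma vanish 2-1}), first I would take the same cutoff $\phi$ (equal to $1$ on $B_{R}$, supported in $B_{2R}$, with $\left|\nabla\phi\right|\le 2/R$), multiply (\ref{eq: lemma vanish 2}) by $\phi^{2}$, integrate by parts, and put $f\phi$ into the weighted Poincar\'e inequality (\ref{eq:wpi}). Combining the two resulting inequalities exactly as in Lemma~\ref{lemma vanish} — the only new contribution being a harmless $+b\int_{M}f^{2}\phi^{2}$ — produces
\[
\left(1+A-a\right)\int_{M}\left|\nabla f\right|^{2}\phi^{2}\le a\int_{M}f^{2}\left|\nabla\phi\right|^{2}+2\left(a-1\right)\int_{M}f\phi\left\langle \nabla f,\nabla\phi\right\rangle +b\int_{M}f^{2}\phi^{2}.
\]
Applying Cauchy--Schwarz and Young to the cross term to absorb a small $\epsilon\left|a-1\right|\int_{M}\left|\nabla f\right|^{2}\phi^{2}$ on the left, then letting $R\to\infty$ (using $\int_{M}f^{2}<\infty$ so that $\int_{M}f^{2}\left|\nabla\phi\right|^{2}\le\tfrac{4}{R^{2}}\int_{M}f^{2}\to0$, together with monotone convergence for the other two terms), one gets $\left(1+A-a-\epsilon\left|a-1\right|\right)\int_{M}\left|\nabla f\right|^{2}\le b\int_{M}f^{2}<\infty$. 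In particular $\left|\nabla f\right|\in L^{2}$, and letting $\epsilon\to0$ gives (\ref{eq: lemma vanish 2-1}).

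For the equality statement, set $D:=f\Delta f-A\left|\nabla f\right|^{2}+aqf^{2}+bf^{2}$, which is nonnegative by (\ref{eq: lemma vanish 2}). Integrating $D\phi^{2}$, integrating by parts in $\int_{M}\phi^{2}f\Delta f$, and bounding $a\int_{M}q\left(\phi f\right)^{2}\le a\int_{M}\left|\nabla\left(\phi f\right)\right|^{2}$ via (\ref{eq:wpi}) (legitimate for any sign of $q$, since $a>0$), one obtains
\[
0\le\int_{M}D\phi^{2}\le-\left(1+A-a\right)\int_{M}\left|\nabla f\right|^{2}\phi^{2}+2\left(a-1\right)\int_{M}f\phi\left\langle \nabla f,\nabla\phi\right\rangle +a\int_{M}f^{2}\left|\nabla\phi\right|^{2}+b\int_{M}f^{2}\phi^{2}.
\]
Since $\left|\nabla f\right|\in L^{2}$ is now available, as $R\to\infty$ the cross term tends to $0$ by Cauchy--Schwarz (because $\left\Vert f\nabla\phi\right\Vert_{2}\to0$ while $\left\Vert \phi\nabla f\right\Vert_{2}\le\left\Vert \nabla f\right\Vert_{2}$), $\int_{M}f^{2}\left|\nabla\phi\right|^{2}\to0$, and by dominated convergence $\int_{M}\left|\nabla f\right|^{2}\phi^{2}\to\int_{M}\left|\nabla f\right|^{2}$, $\int_{M}f^{2}\phi^{2}\to\int_{M}f^{2}$. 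Hence $\limsup_{R}\int_{M}D\phi^{2}\le-\left(1+A-a\right)\int_{M}\left|\nabla f\right|^{2}+b\int_{M}f^{2}$, whereas Fatou (using $D\ge0$, $\phi^{2}\to1$) gives $\int_{M}D\le\liminf_{R}\int_{M}D\phi^{2}$. Therefore $\int_{M}D\le-\left(1+A-a\right)\int_{M}\left|\nabla f\right|^{2}+b\int_{M}f^{2}$; if equality holds in (\ref{eq: lemma vanish 2-1}) the right-hand side vanishes, so $\int_{M}D=0$, and $D\equiv0$ by continuity, which is exactly equality in (\ref{eq: lemma vanish 2}).

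I expect the equality case to be the main obstacle: the $\epsilon$-argument of the first part yields only strict-in-$\epsilon$ bounds and is not sharp enough to detect pointwise equality, so one must re-run the estimate through the nonnegative quantity $D$, being careful on two points — that the weighted Poincar\'e bound on $\int_{M}q\left(\phi f\right)^{2}$ is invoked \emph{without} any sign hypothesis on $q$, and that $\left|\nabla f\right|\in L^{2}$ (from the first part) is in hand before passing to the limit, so that the cross term and $\int_{M}\left|\nabla f\right|^{2}\phi^{2}$ genuinely converge.
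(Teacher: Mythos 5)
Your proposal is correct and follows essentially the same route as the paper: the first part is the cutoff/weighted-Poincar\'e/Young argument with the extra $b\int_M f^2\phi^2$ carried along, and the equality case is handled exactly as in the paper by integrating the nonnegative quantity $f\Delta f-A|\nabla f|^2+aqf^2+bf^2$ against $\phi^2$, combining with the weighted Poincar\'e inequality for $f\phi$, and passing to the limit using $\int_M f^2<\infty$ and $\int_M|\nabla f|^2<\infty$. Your use of Fatou/dominated convergence in place of the paper's monotone convergence is only a cosmetic difference.
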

\begin{proof}
First we show inequality (\ref{eq: lemma vanish 2-1}). Take a cutoff
function $\phi$ on $M$ such that 
\[
\phi=\begin{cases}
1 & \text{in }B_{R},\\
0 & \text{in }M\setminus B_{2R},
\end{cases}
\]
\[
0\leq\phi\leq1\,\,\,\,\,\text{in }B_{2R}\setminus B_{R}
\]
and
\[
\left|\nabla\phi\right|\leq\frac{2}{R}\,\,\,\,\,\text{in }B_{2R}\setminus B_{R}.
\]
Here $B_{R}$ is the open ball with center at a fixed point of $M$
and radius $R$. Multiplying inequality (\ref{eq: lemma vanish 2})
by $\phi^{2}$ and integrating by parts gives
\[
\left(1+A\right)\int_{M}\left|\nabla f\right|^{2}\phi^{2}\leq a\int_{M}qf^{2}\phi^{2}+b\int_{M}f^{2}\phi^{2}-2\int_{M}f\phi\left\langle \nabla f,\nabla\phi\right\rangle .
\]
Putting $f\phi$ in the weighted Poincaré inequality (\ref{eq:wpi})
yields
\begin{equation}
\int_{M}q\left(f\phi\right)^{2}\leq\int_{M}f^{2}\left|\nabla\phi\right|^{2}+\int_{M}\left|\nabla f\right|^{2}\phi^{2}+2\int_{M}f\phi\left\langle \nabla f,\nabla\phi\right\rangle .\label{eq:lemma vanish 2-2}
\end{equation}
Combining these two inequalities gives
\[
\left(1+A-a\right)\int_{M}\left|\nabla f\right|^{2}\phi^{2}\leq b\int_{M}f^{2}\phi^{2}+a\int_{M}f^{2}\left|\nabla\phi\right|^{2}+2\left(a-1\right)\int_{M}f\phi\left\langle \nabla f,\nabla\phi\right\rangle .
\]
Fix $\epsilon>0$. By Cauchy\textendash{}Schwarz and Young's inequalities
we have
\[
\left(1+A-a-\epsilon\left|a-1\right|\right)\int_{M}\left|\nabla f\right|^{2}\phi^{2}\leq b\int_{M}f^{2}\phi^{2}+\left(a+\frac{\left|a-1\right|}{\epsilon}\right)\int_{M}f^{2}\left|\nabla\phi\right|^{2}.
\]
Note that $1+A-a-\epsilon\left|a-1\right|>0$ for all sufficiently
small $\epsilon>0$. Since $\int_{M}f^{2}<\infty$ sending $R\to\infty$,
using the monotone convergence theorem and then sending $\epsilon\to0$
proves inequality (\ref{eq: lemma vanish 2-1}).

Now we assume that equality holds in (\ref{eq: lemma vanish 2-1}).
Multiplying inequality (\ref{eq: lemma vanish 2}) by $\phi^{2}$
and integrating gives
\begin{align*}
0 & \leq\int_{M}\left(f\Delta f-A\left|\nabla f\right|^{2}+aqf^{2}+bf^{2}\right)\phi^{2}\\
 & =-\left(1+A\right)\int_{M}\left|\nabla f\right|^{2}\phi^{2}+b\int_{M}f^{2}\phi^{2}-2\int_{M}f\phi\left\langle \nabla f,\nabla\phi\right\rangle +a\int_{M}qf^{2}\phi^{2}.
\end{align*}
Combining this with inequality (\ref{eq:lemma vanish 2-2}) and using
the Cauchy\textendash{}Schwarz inequality yields
\begin{align}
0 & \leq\int_{M}\left(f\Delta f-A\left|\nabla f\right|^{2}+aqf^{2}+bf^{2}\right)\phi^{2}\label{eq: lemma vanish 2-3}\\
 & \leq-\left(1+A-a\right)\int_{M}\left|\nabla f\right|^{2}\phi^{2}+b\int_{M}f^{2}\phi^{2}\nonumber \\
 & +a\int_{M}f^{2}\left|\nabla\phi\right|^{2}+2\left|a-1\right|\left(\int_{M}f^{2}\left|\nabla\phi\right|^{2}\right)^{\frac{1}{2}}\left(\int_{M}\left|\nabla f\right|^{2}\phi^{2}\right)^{\frac{1}{2}}.\nonumber 
\end{align}
Since the quantity being integrated in (\ref{eq: lemma vanish 2-3})
is non-negative and $\int_{M}f^{2}<\infty$ and $\int_{M}\left|\nabla f\right|^{2}<\infty$,
sending $R\to\infty$ and using the monotone convergence theorem gives
\begin{align}
0 & \leq\int_{M}\left(f\Delta f-A\left|\nabla f\right|^{2}+aqf^{2}+bf^{2}\right)\label{eq: lemma vanish 2-4}\\
 & \leq-\left(1+A-a\right)\int_{M}\left|\nabla f\right|^{2}+b\int_{M}f^{2}\nonumber \\
 & =0.\nonumber 
\end{align}
Since the quantity being integrated in (\ref{eq: lemma vanish 2-4})
is non-negative it follows that equality holds in (\ref{eq: lemma vanish 2}).
\end{proof}
The proof of Theorem \ref{vanish} also uses the following straightforward
lemma.
\begin{lem}
\label{lemma lambda 2}If a smooth function $f$ on a complete Riemannian
manifold $M$ satisfies 
\[
\int_{M}f^{2}<\infty,
\]
then 
\[
\lambda_{1}\left(M\right)\int_{M}f^{2}\leq\int_{M}\left|\nabla f\right|^{2}.
\]
\end{lem}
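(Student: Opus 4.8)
The plan is to deduce the inequality from the variational definition of $\lambda_{1}\left(M\right)$ by the same cutoff argument used in the proofs of Lemma \ref{lemma vanish} and Lemma \ref{lemma lambda}; the point is simply to pass from test functions in $C_{c}^{\infty}\left(M\right)$ to a general smooth $L^{2}$ function. First I would dispose of the two trivial cases: if $\lambda_{1}\left(M\right)=0$ there is nothing to prove, and if $\int_{M}\left|\nabla f\right|^{2}=\infty$ the asserted inequality holds automatically. So from now on assume $\lambda_{1}\left(M\right)>0$ and $\int_{M}\left|\nabla f\right|^{2}<\infty$.

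Next, take the same cutoff functions $\phi=\phi_{R}$ as before, that is, $\phi_{R}=1$ on $B_{R}$, $\phi_{R}=0$ on $M\setminus B_{2R}$, $0\leq\phi_{R}\leq1$, and $\left|\nabla\phi_{R}\right|\leq\frac{2}{R}$. Then $f\phi_{R}\in C_{c}^{\infty}\left(M\right)$, so applying the definition of $\lambda_{1}\left(M\right)$ (equivalently, the weighted Poincar\'e inequality (\ref{eq:wpi}) with $q=\lambda_{1}\left(M\right)$) to $f\phi_{R}$ gives
\[
\lambda_{1}\left(M\right)\int_{M}f^{2}\phi_{R}^{2}\leq\int_{M}\left|\nabla\left(f\phi_{R}\right)\right|^{2}=\int_{M}\left|\nabla f\right|^{2}\phi_{R}^{2}+2\int_{M}f\phi_{R}\left\langle \nabla f,\nabla\phi_{R}\right\rangle +\int_{M}f^{2}\left|\nabla\phi_{R}\right|^{2}.
\]

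Finally I would estimate the two error terms as $R\to\infty$. Since $\nabla\phi_{R}$ is supported in $B_{2R}\setminus B_{R}$ and $\left|\nabla\phi_{R}\right|\leq\frac{2}{R}$, we have $\int_{M}f^{2}\left|\nabla\phi_{R}\right|^{2}\leq\frac{4}{R^{2}}\int_{M}f^{2}\to0$, and by the Cauchy\textendash{}Schwarz inequality the cross term is bounded in absolute value by $2\left(\int_{M}\left|\nabla f\right|^{2}\phi_{R}^{2}\right)^{1/2}\left(\int_{M}f^{2}\left|\nabla\phi_{R}\right|^{2}\right)^{1/2}$, which also tends to $0$ because $\int_{M}\left|\nabla f\right|^{2}<\infty$. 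By the monotone convergence theorem $\int_{M}f^{2}\phi_{R}^{2}\to\int_{M}f^{2}$ and $\int_{M}\left|\nabla f\right|^{2}\phi_{R}^{2}\to\int_{M}\left|\nabla f\right|^{2}$, so letting $R\to\infty$ yields $\lambda_{1}\left(M\right)\int_{M}f^{2}\leq\int_{M}\left|\nabla f\right|^{2}$. There is no real obstacle here: this is the standard density argument extending the Rayleigh-quotient bound from compactly supported smooth functions to $L^{2}$ functions with $L^{2}$ gradient, and the only things to be careful about are handling the case $\int_{M}\left|\nabla f\right|^{2}=\infty$ separately and noting that $f\phi_{R}$ is a genuine compactly supported smooth test function.
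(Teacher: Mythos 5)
Your proof is correct and follows essentially the same route as the paper: both plug the cutoff test function $f\phi_{R}$ into the Rayleigh quotient (equivalently, the weighted Poincar\'e inequality with $q=\lambda_{1}\left(M\right)$) and let $R\to\infty$. The only cosmetic difference is that the paper absorbs the cross term via Young's inequality with a parameter $\epsilon$ sent to $0$ at the end, which handles all cases at once, whereas you use Cauchy--Schwarz and dispose of the case $\int_{M}\left|\nabla f\right|^{2}=\infty$ separately; both are fine.
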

\begin{proof}
Take a cutoff function $\phi$ on $M$ such that 
\[
\phi=\begin{cases}
1 & \text{in }B_{R},\\
0 & \text{in }M\setminus B_{2R},
\end{cases}
\]
\[
0\leq\phi\leq1\,\,\,\,\,\text{in }B_{2R}\setminus B_{R}
\]
and
\[
\left|\nabla\phi\right|\leq\frac{2}{R}\,\,\,\,\,\text{in }B_{2R}\setminus B_{R}.
\]
Fix $\epsilon>0$. Then 
\begin{align*}
\lambda_{1}\left(M\right)\int_{M}\left(f\phi\right)^{2} & \leq\int_{M}f^{2}\left|\nabla\phi\right|^{2}+\int_{M}\left|\nabla f\right|^{2}\phi^{2}+2\int_{M}f\phi\left\langle \nabla f,\nabla\phi\right\rangle \\
 & \leq\left(1+\epsilon\right)\int_{M}\left|\nabla f\right|^{2}\phi^{2}+\left(1+\frac{1}{\epsilon}\right)\int_{M}f^{2}\left|\nabla\phi\right|^{2}.
\end{align*}
Since $\int_{M}f^{2}<\infty$ sending $R\to\infty$, using the monotone
convergence theorem and then sending $\epsilon\to0$ proves the result.
\end{proof}
We prove the following vanishing theorem, which shows that Theorem
\ref{vanish} holds with a more general lower bound of the curvature
operator when the first eigenvalue of the Laplacian satisfies a certain
lower bound.
\begin{thm}
\label{lambda}Suppose a complete Riemannian manifold $M^{n}$ satisfies
a weighted Poincaré inequality with weight function $q$ and the curvature
operator satisfies
\[
\left\langle \mathcal{R}\omega,\omega\right\rangle \geq-aq\left|\omega\right|^{2}-b\left|\omega\right|^{2}
\]
for all $k$-forms $\omega$ on $M^{n}$, where $0<a<C_{n,k}$ and
$b>0$. Assume the first eigenvalue of the Laplacian satisfies
\[
\lambda_{1}\left(M^{n}\right)>\frac{b}{C_{n,k}-a}.
\]
Then the space of $L^{2}$ harmonic $k$-forms on $M^{n}$ is trivial.\end{thm}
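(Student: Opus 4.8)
The plan is to mimic the proof of Theorem \ref{vanish}, but using Lemma \ref{lemma lambda} instead of Lemma \ref{lemma vanish}, and then to contradict the eigenvalue hypothesis by playing the resulting gradient estimate against Lemma \ref{lemma lambda 2}.

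First I would fix an $L^{2}$ harmonic $k$-form $\omega$ on $M^{n}$ and set $f=\left|\omega\right|$. Exactly as in the proof of Theorem \ref{vanish}, the Bochner-Weitzenb\"ock formula (\ref{eq:bochner formula}) gives $\left|\omega\right|\Delta\left|\omega\right|=\left|\nabla\omega\right|^{2}-\left|\nabla\left|\omega\right|\right|^{2}+\left\langle \mathcal{R}\omega,\omega\right\rangle$, and since $L^{2}$ harmonic forms are closed and co-closed the refined Kato inequality (\ref{eq:des kato}) together with the curvature hypothesis $\left\langle \mathcal{R}\omega,\omega\right\rangle \geq-aq\left|\omega\right|^{2}-b\left|\omega\right|^{2}$ yields
\[
f\Delta f\geq\left(C_{n,k}-1\right)\left|\nabla f\right|^{2}-aqf^{2}-bf^{2}.
\]
Since $\int_{M}f^{2}=\int_{M}\left|\omega\right|^{2}<\infty$ and $0<a<C_{n,k}=1+\left(C_{n,k}-1\right)$, Lemma \ref{lemma lambda} applies with $A=C_{n,k}-1$ and the given $b>0$, producing
\[
\int_{M}\left|\nabla f\right|^{2}\leq\frac{b}{C_{n,k}-a}\int_{M}f^{2}.
\]

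Next I would invoke Lemma \ref{lemma lambda 2}, which applies because $\int_{M}f^{2}<\infty$, to obtain $\lambda_{1}\left(M^{n}\right)\int_{M}f^{2}\leq\int_{M}\left|\nabla f\right|^{2}$. Chaining the two inequalities gives
\[
\lambda_{1}\left(M^{n}\right)\int_{M}f^{2}\leq\frac{b}{C_{n,k}-a}\int_{M}f^{2}.
\]
If $\int_{M}f^{2}>0$, this forces $\lambda_{1}\left(M^{n}\right)\leq\frac{b}{C_{n,k}-a}$, contradicting the hypothesis $\lambda_{1}\left(M^{n}\right)>\frac{b}{C_{n,k}-a}$. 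Hence $\int_{M}f^{2}=0$, that is $\omega$ is identically zero, which proves that the space of $L^{2}$ harmonic $k$-forms on $M^{n}$ is trivial.

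I do not expect a serious obstacle: the argument is a clean coupling of the two preparatory lemmas with the Bochner-Kato input already used for Theorem \ref{vanish}. The only point requiring the customary care is the mild non-smoothness of $f=\left|\omega\right|$ at the zero set of $\omega$; this is handled as in the proof of Theorem \ref{vanish}, since on $\left\{\omega\neq0\right\}$ the refined Kato inequality and hence inequality (\ref{eq: lemma vanish 2}) hold in the classical sense, and the cutoff integrations in Lemma \ref{lemma lambda} and Lemma \ref{lemma lambda 2} go through unchanged. Everything else is a direct substitution into results already established in this section.
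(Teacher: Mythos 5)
Your proposal is correct and follows essentially the same route as the paper: the Bochner--Weitzenb\"ock formula with the refined Kato inequality gives the differential inequality for $f=\left|\omega\right|$, Lemma \ref{lemma lambda} with $A=C_{n,k}-1$ yields the gradient bound, and Lemma \ref{lemma lambda 2} then forces $\lambda_{1}\left(M^{n}\right)\leq\frac{b}{C_{n,k}-a}$ unless $\omega$ vanishes, contradicting the hypothesis. No gaps here.
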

\begin{proof}
Fix a $L^{2}$ harmonic $k$-form $\omega$ on $M^{n}$. As in the
proof of Theorem \ref{vanish} we have 
\[
f\Delta f\geq\left(C_{n,k}-1\right)\left|\nabla f\right|^{2}-aqf^{2}-bf^{2},
\]
where $f=\left|\omega\right|$. Applying Lemma \ref{lemma lambda}
with $A=C_{n,k}-1$ gives 
\[
\int_{M^{n}}\left|\nabla f\right|^{2}\leq\frac{b}{C_{n,k}-a}\int_{M^{n}}f^{2}.
\]
By Lemma \ref{lemma lambda 2} we have 
\[
\lambda_{1}\left(M^{n}\right)\int_{M^{n}}f^{2}\leq\int_{M^{n}}\left|\nabla f\right|^{2}.
\]
Combining the last two inequalities yields 
\[
\lambda_{1}\left(M^{n}\right)\int_{M^{n}}f^{2}\leq\frac{b}{C_{n,k}-a}\int_{M^{n}}f^{2}.
\]
If $\omega$ is not identically zero then 
\[
\lambda_{1}\left(M^{n}\right)\leq\frac{b}{C_{n,k}-a},
\]
a contradiction. This proves that $\omega$ is identically zero.
\end{proof}
Applying Theorem \ref{lambda} to one-forms proves Theorem \ref{lambda cor}.
\begin{proof}[Proof of Theorem \ref{lambda cor}]
For one-forms $\left\langle \mathcal{R}\omega,\omega\right\rangle =\ric\left(\omega,\omega\right)$
and $C_{n,1}=\frac{n}{n-1}$.
\end{proof}

To study the ridigity in Theorem \ref{lambda cor} we use the following
lemma.
\begin{lem}[{\cite[Lemma 4.1]{LW2006}}]
Let $M^{n}$ be a complete Riemannian manifold of dimension $n\geq2$.
Assume that the Ricci curvature of $M$ satisfies the lower bound
\[
\ric_{M}\left(x\right)\geq-\left(n-1\right)\tau\left(x\right)
\]
for all $x\in M$. Suppose $f$ is a nonconstant harmonic function
defined on $M$. Then the function $\left|\nabla f\right|$ must satisfy
the differential inequality
\[
\Delta\left|\nabla f\right|\geq-\left(n-1\right)\tau\left|\nabla f\right|+\frac{\left|\nabla\left|\nabla f\right|\right|^{2}}{\left(n-1\right)\left|\nabla f\right|}
\]
in the weak sense. Moreover, if equality holds, then $M$ is given
by $M=\mathbb{R}\times N^{n-1}$ with the warped product metric
\[
ds_{M}^{2}=dt^{2}+\eta\left(t\right)^{2}ds_{N}^{2}
\]
for some positive function $\eta(t)$, and some manifold $N^{n-1}$.
In this case, $\tau\left(t\right)$ is a function of $t$ alone satisfying
\[
\eta''\left(t\right)\eta^{-1}\left(t\right)=\tau\left(t\right).
\]

\end{lem}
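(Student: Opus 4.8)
The plan is to combine the Bochner formula for the gradient of a harmonic function with the refined Kato inequality, and then extract the rigidity from the resulting pointwise identities. First I would work on the open set $\Omega=\{\nabla f\neq0\}$, where $u:=|\nabla f|$ is smooth. Since $f$ is harmonic, the Bochner formula reads $\frac{1}{2}\Delta|\nabla f|^{2}=|\nabla^{2}f|^{2}+\ric(\nabla f,\nabla f)$ (the term $\langle\nabla\Delta f,\nabla f\rangle$ vanishes), and the elementary identity $\Delta u=\frac{1}{u}(\frac{1}{2}\Delta u^{2}-|\nabla u|^{2})$ turns this into $\Delta u=\frac{|\nabla^{2}f|^{2}-|\nabla u|^{2}}{u}+\frac{\ric(\nabla f,\nabla f)}{u}$. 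The curvature hypothesis bounds the last term below by $-(n-1)\tau u$, so it remains to prove the refined Kato inequality $|\nabla^{2}f|^{2}\geq\frac{n}{n-1}|\nabla u|^{2}$. For this I would fix an orthonormal frame with $e_{1}=\nabla f/u$ at the point and write $f_{ij}$ for the components of $\nabla^{2}f$; then $\nabla_{j}u=f_{1j}$, so $|\nabla u|^{2}=\sum_{j}f_{1j}^{2}$, whereas $|\nabla^{2}f|^{2}=f_{11}^{2}+2\sum_{j\geq2}f_{1j}^{2}+\sum_{i,j\geq2}f_{ij}^{2}$. Since $\Delta f=0$ gives $f_{11}=-\sum_{i\geq2}f_{ii}$, Cauchy--Schwarz yields $\sum_{i,j\geq2}f_{ij}^{2}\geq\sum_{i\geq2}f_{ii}^{2}\geq\frac{1}{n-1}f_{11}^{2}$, and combining this with $2-\frac{n}{n-1}=\frac{n-2}{n-1}\geq0$ gives the inequality; hence $\frac{|\nabla^{2}f|^{2}-|\nabla u|^{2}}{u}\geq\frac{|\nabla u|^{2}}{(n-1)u}$, which is the asserted differential inequality on $\Omega$. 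To get it on all of $M$ in the weak sense I would replace $u$ by $u_{\epsilon}=(|\nabla f|^{2}+\epsilon)^{1/2}$, derive the corresponding smooth inequality with an error term that disappears as $\epsilon\to0$, and use that the critical set of $f$ has measure zero; this is a routine regularization.

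For the equality statement, assume equality holds. On $\Omega$ this forces equality simultaneously in the curvature bound, giving $\ric(\nabla f,\nabla f)=-(n-1)\tau u^{2}$, and in the refined Kato inequality. Tracing back the Kato computation, equality there forces (for $n\geq3$; the case $n=2$, where the inequality is automatically an equality, is handled directly) $f_{1j}=0$ for $j\geq2$, $f_{ij}=0$ for distinct $i,j\geq2$, and $f_{22}=\cdots=f_{nn}=-f_{11}/(n-1)$; equivalently, in the adapted frame $\nabla^{2}f=f_{11}\,e^{1}\otimes e^{1}-\frac{f_{11}}{n-1}(g-e^{1}\otimes e^{1})$. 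In particular $\nabla u=f_{11}e_{1}$ is parallel to $\nabla f$, so $u$ is constant on connected level sets of $f$, hence $u=\psi(f)$ on $\Omega$ for a positive smooth $\psi$, and the level sets of $f$ inside $\Omega$ are totally umbilic hypersurfaces.

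Next I would reparametrize: let $t$ be a primitive of $1/\psi$, so $t=t(f)$ satisfies $|\nabla t|\equiv1$ on $\Omega$. Then $\nabla^{2}t$ vanishes in the $\nabla t$ direction, and substituting the normal form of $\nabla^{2}f$ gives $\nabla^{2}t=h(t)\,(g-dt\otimes dt)$ for a function $h$ of $t$ alone. This is precisely the condition that $\Omega$, foliated by the level sets of $t$, be a warped product: the flow of the unit field $\nabla t$ — complete because $M$ is — identifies a connected piece with $\mathbb{R}\times N$ ($N$ a level set) carrying the metric $dt^{2}+\eta(t)^{2}ds_{N}^{2}$ with $\eta=\exp\int h$, and a standard argument using completeness shows the critical set of $f$ is empty, so $\Omega=M$ and $t$ maps onto $\mathbb{R}$. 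Finally $\tau$ is pinned down by the Ricci equality: for this warped product $\ric(\partial_{t},\partial_{t})=-(n-1)\eta''/\eta$, while $\partial_{t}=\nabla f/u$ is a unit vector and thus $\ric(\partial_{t},\partial_{t})=\ric(\nabla f,\nabla f)/u^{2}=-(n-1)\tau$; comparing the two gives $\eta''\eta^{-1}=\tau$, which in particular shows $\tau$ depends on $t$ alone.

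The main obstacle is the coupling of the two equality analyses: reading off the precise Hessian normal form from equality in the refined Kato inequality, and then upgrading the local warped-product structure on $\Omega$ to the global splitting $M=\mathbb{R}\times N$, that is, ruling out critical points of $f$ and showing that the distance-like function $t$ is a proper surjection onto $\mathbb{R}$ — the step where completeness of $M$ is genuinely used.
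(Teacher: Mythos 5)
You should first note that the paper does not prove this statement at all: it is quoted verbatim, with citation, from Li--Wang \cite{LW2006}, so the only meaningful comparison is with that source, and your outline does follow the same standard route (Bochner formula for $|\nabla f|$ plus the refined Kato inequality for harmonic functions, then an equality analysis). Your first part is correct and complete in substance: the pointwise computation on $\Omega=\{\nabla f\neq0\}$, the frame computation giving $|\nabla^{2}f|^{2}\geq\frac{n}{n-1}\left|\nabla\left|\nabla f\right|\right|^{2}$, and the regularization via $u_{\epsilon}=(|\nabla f|^{2}+\epsilon)^{1/2}$ to get the weak inequality across the critical set are all standard and fine.

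The genuine gap is in the rigidity half, and it sits exactly where you wave your hands. First, the claim that ``a standard argument using completeness shows the critical set of $f$ is empty'' is not a routine step, and as stated it is false: for $n=2$ the refined Kato inequality is identically an equality for every harmonic function, so equality in the lemma gives no Hessian normal form whatsoever (the coefficient $2-\frac{n}{n-1}$ vanishes, so $f_{12}$ is unconstrained and $\nabla|\nabla f|$ need not be parallel to $\nabla f$); concretely, $f=x^{2}-y^{2}$ on flat $\mathbb{R}^{2}$ with $\tau\equiv0$ satisfies equality in the weak sense on all of $M$ and has a critical point at the origin, so your plan of flowing along $\nabla f/|\nabla f|$ from a level set of $f$ cannot be the mechanism there, and ``handled directly'' conceals an argument you have not given. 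Second, even for $n\geq3$, ruling out critical points and upgrading the local warped-product structure on $\Omega$ to the global isometry $M=\mathbb{R}\times N^{n-1}$ (including that $t$ is defined on all of $M$ and is onto $\mathbb{R}$) is the substantive content of Li--Wang's lemma; your proposal identifies this as ``the main obstacle'' but then simply asserts it. Related smaller gaps: the passage from $\nabla|\nabla f|\parallel\nabla f$ to $|\nabla f|=\psi(f)$ needs connectedness of level sets (or a local argument propagated along the flow), and the claim that $h$ in $\nabla^{2}t=h\,(g-dt\otimes dt)$ depends on $t$ alone needs justification. To close the proof you would have to reproduce the actual global argument of \cite{LW2006} (or an equivalent Cheeger--Gromoll-type splitting argument adapted to the function $t$), not merely appeal to it.
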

We prove the following ridigity theorem. The idea is to combine the
lemma above with the equality conclusion of Lemma \ref{lemma lambda}.
\begin{thm}
\label{split}Suppose a complete Riemannian manifold $M^{n}$ satisfies
a weighted Poincaré inequality with weight function $q$ and the Ricci
curvature satisfies 
\[
\ric\geq-aq-b
\]
for some $0<a<\frac{n}{n-1}$ and $b>0$. Assume the first eigenvalue
of the Laplacian satisfies 
\[
\lambda_{1}\left(M^{n}\right)=\frac{b}{\frac{n}{n-1}-a},
\]
and the space of $L^{2}$ harmonic one-forms on $M^{n}$ is non-trivial.
Then the universal cover of $M^{n}$ splits as $\widetilde{M}^{n}=\mathbb{R}\times N^{n-1}$
with the warped product metric
\[
g_{\widetilde{M}^{n}}=dt^{2}+\eta\left(t\right)^{2}g_{N^{n-1}}
\]
for some positive function $\eta\left(t\right)$ and some hypersurface
$N^{n-1}$ in $\widetilde{M}^{n}$. In this case, $q$ is a function
of $t$ alone satisfying
\[
\frac{\eta''\left(t\right)}{\eta\left(t\right)}=\frac{1}{n-1}\left(aq+b\right).
\]
\end{thm}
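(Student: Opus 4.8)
The plan is to extract from the equality hypothesis a rigid pointwise identity for $f=\left|\omega\right|$, where $\omega$ is a nontrivial $L^{2}$ harmonic one-form, and then feed this identity into the Li--Wang splitting lemma above after lifting to the universal cover, where the harmonic one-form becomes the differential of a harmonic function. First I would run the proof of Theorem \ref{lambda} in the case $k=1$. With $f=\left|\omega\right|$, the Bochner--Weitzenb\"ock formula (\ref{eq:bochner formula}), the refined Kato inequality (\ref{eq:des kato}), and $C_{n,1}=\frac{n}{n-1}$ (so $A=\frac{1}{n-1}$) give $f\Delta f\geq\frac{1}{n-1}\left|\nabla f\right|^{2}-aqf^{2}-bf^{2}$ at points where $f>0$. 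Since $\int_{M}f^{2}<\infty$, Lemma \ref{lemma lambda} yields $\int_{M}\left|\nabla f\right|^{2}\leq\frac{b}{\frac{n}{n-1}-a}\int_{M}f^{2}$ (in particular $\int_{M}\left|\nabla f\right|^{2}<\infty$), while Lemma \ref{lemma lambda 2} gives $\lambda_{1}\left(M^{n}\right)\int_{M}f^{2}\leq\int_{M}\left|\nabla f\right|^{2}$. Because $\lambda_{1}\left(M^{n}\right)=\frac{b}{\frac{n}{n-1}-a}$, all of these are equalities, so equality holds in (\ref{eq: lemma vanish 2}) by the last assertion of Lemma \ref{lemma lambda}; that is,
\[
f\Delta f=\frac{1}{n-1}\left|\nabla f\right|^{2}-aqf^{2}-bf^{2}.
\]
Reading this back through the Bochner formula and the refined Kato inequality shows that the Kato inequality is in fact an equality and that $\ric\left(\omega,\omega\right)=-\left(aq+b\right)\left|\omega\right|^{2}$ wherever $\omega\neq0$, which are precisely the ingredients entering Li and Wang's lemma.

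Next I would pass to the universal covering $\pi\colon\widetilde{M}^{n}\to M^{n}$. The lift $\widetilde{\omega}=\pi^{*}\omega$ is closed and co-closed (as $\pi$ is a local isometry), and $\widetilde{M}^{n}$ being simply connected we may write $\widetilde{\omega}=d\widetilde{f}$ for a smooth function $\widetilde{f}$; then $\Delta\widetilde{f}=-d^{*}d\widetilde{f}=-d^{*}\widetilde{\omega}=0$, so $\widetilde{f}$ is harmonic, and it is nonconstant because $\omega\not\equiv0$. Put $\tau=\frac{1}{n-1}\left(aq+b\right)$, regarded as a function on $\widetilde{M}^{n}$ via $\pi$. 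The Ricci bound lifts to $\ric_{\widetilde{M}^{n}}\geq-\left(n-1\right)\tau$, and, since $\left|\nabla\widetilde{f}\right|=\pi^{*}f$ and $\Delta$ commutes with $\pi^{*}$, the identity above for $f$ pulls back and, after dividing by $\left|\nabla\widetilde{f}\right|$, becomes
\[
\Delta\left|\nabla\widetilde{f}\right|=-\left(n-1\right)\tau\left|\nabla\widetilde{f}\right|+\frac{\left|\nabla\left|\nabla\widetilde{f}\right|\right|^{2}}{\left(n-1\right)\left|\nabla\widetilde{f}\right|}
\]
at points where $\nabla\widetilde{f}\neq0$. This is exactly the equality case of the differential inequality in \cite[Lemma 4.1]{LW2006}.

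Invoking that lemma with $\tau$ as above then gives $\widetilde{M}^{n}=\mathbb{R}\times N^{n-1}$ with the warped product metric $dt^{2}+\eta\left(t\right)^{2}g_{N^{n-1}}$ for some positive function $\eta$, with $N^{n-1}$ realized as the hypersurface $\left\{t=0\right\}$ of $\widetilde{M}^{n}$, and with $\tau$ a function of $t$ alone satisfying $\eta''\left(t\right)\eta\left(t\right)^{-1}=\tau\left(t\right)$. Since $\tau=\frac{1}{n-1}\left(aq+b\right)$, this forces $q$ (pulled back to $\widetilde{M}^{n}$, and still denoted $q$) to depend only on $t$, and $\frac{\eta''\left(t\right)}{\eta\left(t\right)}=\frac{1}{n-1}\left(aq+b\right)$, which is the asserted conclusion.

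The step I expect to be delicate is justifying the pointwise manipulations with $f=\left|\omega\right|$ near the zero set of $\omega$: the equality in (\ref{eq: lemma vanish 2}) furnished by Lemma \ref{lemma lambda} comes from an integral identity, so a priori it holds only almost everywhere, and $\Delta f$ is classically meaningful only where $f>0$. Routing the argument through the universal cover is what makes this harmless, because there $\left|\omega\right|$ appears as $\left|\nabla\widetilde{f}\right|$ for a genuine harmonic function $\widetilde{f}$, whose critical set has measure zero, and \cite[Lemma 4.1]{LW2006} is formulated with its differential inequality holding in the weak sense, so it is designed to absorb that set. If one wants to be fully careful about the equality case of Lemma \ref{lemma lambda} applied to $f=\left|\omega\right|$, one reruns its cutoff argument with $f$ replaced by $\sqrt{\left|\omega\right|^{2}+\epsilon}$ and lets $\epsilon\to0$; this is routine and I would relegate it to a remark.
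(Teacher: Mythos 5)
Your proposal is correct and follows essentially the same route as the paper: force equality in Lemma \ref{lemma lambda} via Lemma \ref{lemma lambda 2} and the eigenvalue hypothesis, obtain the pointwise identity $f\Delta f=\frac{1}{n-1}\left|\nabla f\right|^{2}-\left(aq+b\right)f^{2}$ for $f=\left|\omega\right|$, lift to the universal cover where $\widetilde{\omega}=dh$ for a nonconstant harmonic function $h$, and conclude with the equality case of Li--Wang's splitting lemma. Your extra remark about handling the zero set of $\omega$ (weak-sense formulation, or regularizing with $\sqrt{\left|\omega\right|^{2}+\epsilon}$) is a careful touch the paper leaves implicit, but it does not change the argument.
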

\begin{proof}
Take a non-vanishing $L^{2}$ harmonic one-form $\omega$ on $M^{n}$.
By the proof of Theorem \ref{lambda} we have
\[
f\Delta f\geq\frac{1}{n-1}\left|\nabla f\right|^{2}-aqf^{2}-bf^{2}
\]
and
\[
\int_{M^{n}}\left|\nabla f\right|^{2}=\frac{b}{\frac{n}{n-1}-a}\int_{M^{n}}f^{2},
\]
where $f=\left|\omega\right|$. By Lemma \ref{lemma lambda} we have
\[
f\Delta f=\frac{1}{n-1}\left|\nabla f\right|^{2}-aqf^{2}-bf^{2}.
\]
Lift the metric of $M^{n}$ to the universal cover $\widetilde{M}^{n}$
and lift $\omega$ to a harmonic one-form $\widetilde{\omega}$ on
$\widetilde{M}^{n}$. Since $\widetilde{M}^{n}$ is simply connected,
there is a smooth function $h$ on $\widetilde{M}^{n}$ such that
$dh=\widetilde{\omega}$. This shows that $h$ is a non-constant harmonic
function on $\widetilde{M}^{n}$ such that
\[
\left|dh\right|\Delta\left|dh\right|=\frac{1}{n-1}\left|\nabla\left|dh\right|\right|^{2}-(aq+b)\left|dh\right|^{2}.
\]
The conclusion follows from the lemma above.\end{proof}
\begin{rem}
The results of this section can be improved using refined Kato inequalities
for $L^{2}$ harmonic forms on Kähler manifolds (\cite[Theorem 4.2]{L2010}
and \cite[Theorem 3.1]{KLZ2008}): a $L^{2}$ harmonic one-form $\omega$
on a complete Kähler manifold satisfies 
\[
\left|\nabla\omega\right|^{2}\geq2\left|\nabla\left|\omega\right|\right|^{2}.
\]
Using this inequality it is not difficult to show that Theorem \ref{vanish cor}
and Theorem \ref{lambda cor} hold with ``$0<a<\frac{n}{n-1}$''
and ``Riemannian manifold'' replaced by ``$0<a<2$'' and ``Kähler
manifold'' respectively.
\end{rem}

\section{Applications to stable hypersurfaces}

For a hypersurface $M^{n}$ in a Riemannian manifold $\overline{M}^{n+1}$
the stability operator (or Jacobi operator) is defined by
\[
L=\Delta+\left|A\right|^{2}+\overline{\ric}\left(\nu,\nu\right),
\]
where $\Delta$ is the Laplacian of $M^{n}$, $A$ is the second fundamental
form and $\overline{\ric}\left(\nu,\nu\right)$ is the Ricci curvature
of $\overline{M}^{n+1}$ in the normal direction. The hypersurface
$M^{n}$ is called stable if the first eigenvalue of the stability
operator is non-negative, in other words
\[
0\leq\lambda_{1}\left(L\right)=\inf_{\phi\in C_{c}^{\infty}\left(M^{n}\right)}\frac{\int_{M^{n}}\left(-L\phi\right)\phi}{\int_{M^{n}}\phi^{2}}.
\]
This shows that a stable hypersurface satisfies a weighted Poincaré
inequality with weight function 
\[
q=\left|A\right|^{2}+\overline{\ric}\left(\nu,\nu\right).
\]
For orthonormal vector fields $X$ and $Y$ on $\overline{M}^{n+1}$
the Bi-Ricci$^{a}$ curvature is defined by 
\[
\overline{\biric^{a}}\left(X,Y\right)=\overline{\ric}\left(X,X\right)+a\overline{\ric}\left(Y,Y\right)-\overline{K}\left(X,Y\right),
\]
where $\overline{K}$ is the sectional curvature of $\overline{M}^{n+1}$
and $a$ is a constant. For $a=1$ the Bi-Ricci$^{a}$ curvature is
equal to the Bi-Ricci curvature defined by Shen and Ye \cite{SY1996}.
Notice that if the sectional curvature is non-negative then the Bi-Ricci
curvature is non-negative.

It is an interesting problem to study the geometry and topology of
stable minimal hypersurfaces in Riemannian manifolds with a certain
non-negative curvature. Fischer-Colbrie and Schoen classified complete
stable minimal surfaces in three-dimensional Riemannian manifolds
with non-negative scalar curvature \cite{FCS1980}. Palmer proved
that the space of $L^{2}$ harmonic one-forms on a complete stable
minimal hypersurface in $\mathbb{R}^{n+1}$ is trivial \cite{P1991}.
Miyaoka and Tanno extended this result to hypersurfaces in Riemannian
manifolds with non-negative sectional curvature and non-negative Bi-Ricci
curvature respectively \cite{M1993,T1996}. Cao, Shen and Zhu proved
that a complete stable minimal hypersurface in $\mathbb{R}^{n+1}$
has only one end \cite{CSZ1997}. Li and Wang proved that for a complete
stable minimal hypersurface properly immersed in a Riemannian manifold
with non-negative sectional curvature either the hypersurface has
only one end or it is totally geodesic with a certain decomposition
\cite{LW2004}. Cheng proved that a complete stable minimal hypersurface
in a Riemannian manifold of dimension at most $6$ and positive Bi-Ricci$^{a}$
curvature has only one end \cite{C2008}.

We prove the following vanishing theorem for $L^{2}$ harmonic one-forms
on complete stable minimal hypersurfaces in Riemannian manifolds with
non-negative Bi-Ricci$^{a}$ curvature.
\begin{thm}
\label{vanish applic}For a complete non-compact stable minimal hypersurface
$M^{n}$ in a Riemannian manifold $\overline{M}^{n+1}$ with non-negative
Bi-Ricci$^{a}$ curvature for some $\frac{n-1}{n}\leq a<\frac{n}{n-1}$,
the space of $L^{2}$ harmonic one-forms on $M^{n}$ is trivial.\end{thm}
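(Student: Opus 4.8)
The plan is to verify that the hypothesis of Theorem~\ref{vanish cor} applies to $M^{n}$, so that the vanishing of $L^{2}$ harmonic one-forms follows immediately. Since $M^{n}$ is a stable minimal hypersurface, it satisfies a weighted Poincar\'e inequality with weight function $q=\left|A\right|^{2}+\overline{\ric}\left(\nu,\nu\right)$, as recalled at the start of this section (here the minimality makes the mean curvature term in the Gauss equation drop out, but this is not even needed for the Poincar\'e inequality itself). So the only thing to check is the Ricci lower bound $\ric\geq-aq$ on $M^{n}$ for the given range of $a$.

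The key step is therefore a pointwise estimate of the Ricci curvature of $M^{n}$ in terms of $q$, using the Gauss equation together with minimality. First I would fix $x\in M^{n}$ and a unit tangent vector $X$ at $x$, extend to an orthonormal frame $e_{1}=X,e_{2},\dots,e_{n}$ of $T_{x}M^{n}$, and write out the Gauss equation for $\ric\left(X,X\right)=\sum_{i\geq 2}K\left(X,e_{i}\right)$. Each term expands as $\overline{K}\left(X,e_{i}\right)$ plus a second-fundamental-form contribution; summing over $i$ and using that $M^{n}$ is minimal so that $\mathrm{tr}\,A=0$, the $A$-terms combine to $-\sum_{i\geq 2}h_{1i}^{2}$ minus the off-diagonal contributions of $h_{11}$, i.e. to something of the form $h_{11}^{2}-\left|A_{X}\right|^{2}$ where $A_{X}$ denotes the column of $A$ in the $X$-direction; in any case the net $A$-contribution is bounded below by $-\left|A\right|^{2}$. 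Meanwhile $\sum_{i\geq 2}\overline{K}\left(X,e_{i}\right)=\overline{\ric}\left(X,X\right)-\overline{\ric}\left(\nu,\nu\right)+\overline{\ric}\left(\nu,\nu\right)-\dots$; more usefully I would reorganize so that the ambient Ricci of $X$ appears together with $\overline{K}(X,\nu)$, and then invoke the Bi-Ricci$^{a}$ hypothesis $\overline{\ric}\left(X,X\right)+a\,\overline{\ric}\left(\nu,\nu\right)-\overline{K}\left(X,\nu\right)\geq 0$ to replace $\overline{\ric}(X,X)-\overline{K}(X,\nu)$ by $-a\,\overline{\ric}\left(\nu,\nu\right)$ from below. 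Collecting the terms yields $\ric\left(X,X\right)\geq -a\,\overline{\ric}\left(\nu,\nu\right)-a\left|A\right|^{2}=-aq$, provided $a$ is large enough that the $\left|A\right|^{2}$ coefficient coming from the Gauss term ($-1$, or $-\tfrac{n-1}{n}$ after using minimality to sharpen the diagonal estimate) is dominated by $-a$, which is exactly where the lower bound $a\geq\tfrac{n-1}{n}$ enters. The upper bound $a<\tfrac{n}{n-1}$ is inherited directly from the hypothesis of Theorem~\ref{vanish cor}.

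I expect the main obstacle to be the bookkeeping in the Gauss-equation step: one must track the off-diagonal entries $h_{1i}$ of the second fundamental form carefully and use $\mathrm{tr}\,A=0$ at the right moment to extract the sharp coefficient $\tfrac{n-1}{n}$ rather than the crude $1$. The standard trick here is to complete the square: $\sum_{i\geq 2}\left(h_{11}h_{ii}-h_{1i}^{2}\right)=h_{11}\left(-h_{11}\right)-\sum_{i\geq 2}h_{1i}^{2}=-h_{11}^{2}-\sum_{i\geq 2}h_{1i}^{2}\geq -\left|A\right|^{2}$, and a slightly more careful argument balancing $h_{11}^{2}$ against the rest of $\left|A\right|^{2}$ gives the improvement to $\tfrac{n-1}{n}\left|A\right|^{2}$; this is classical (it is the same computation appearing in the works of Shen--Ye, Cheng, and others on stable minimal hypersurfaces). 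Once the pointwise inequality $\ric\geq-aq$ is established, Theorem~\ref{vanish cor} applies verbatim—$M^{n}$ is complete and non-compact by hypothesis, and $0<a<\tfrac{n}{n-1}$—and the conclusion that the space of $L^{2}$ harmonic one-forms on $M^{n}$ is trivial follows at once.
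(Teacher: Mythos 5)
Your proposal is correct and follows essentially the same route as the paper: stability gives the weighted Poincar\'e inequality with weight $q=\left|A\right|^{2}+\overline{\ric}\left(\nu,\nu\right)$, the Gauss equation plus minimality and the Bi-Ricci$^{a}$ hypothesis applied to the pair $(X,\nu)$ give $\ric\geq-aq$ via the trace-free Cauchy--Schwarz estimate that produces the sharp coefficient $\tfrac{n-1}{n}$, and Theorem \ref{vanish cor} concludes. The only cosmetic difference is that the paper diagonalizes the second fundamental form before estimating, whereas you work in a general frame and control the off-diagonal entries $h_{1i}$ directly; both yield the same bound.
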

\begin{proof}
By assumption $M^{n}$ satisfies a weighted Poincaré inequality with
weight function 
\[
q=\left|A\right|^{2}+\overline{\ric}\left(\nu,\nu\right).
\]
By Theorem \ref{vanish cor} to complete the proof it suffices to
show that the Ricci curvature of $M^{n}$ satisfies 
\[
\ric\geq-aq.
\]
The proof is adapted from \cite{LW2004}. Take a local orthonormal
frame $e_{1},\dots,e_{n}$ on $M^{n}$ diagonalizing the second fundamental
form, in other words $A\left(e_{i},e_{j}\right)=\lambda_{i}\delta_{ij}$.
By the Gauss equation for $i\neq j$ we have 
\[
K\left(e_{i},e_{j}\right)=\overline{K}\left(e_{i},e_{j}\right)+\lambda_{i}\lambda_{j}.
\]
Since $M^{n}$ is minimal we have 
\[
\lambda_{1}+\sum_{i=2}^{n}\lambda_{i}=0.
\]
This shows that
\[
\ric\left(e_{1},e_{1}\right)=\overline{\ric}\left(e_{1},e_{1}\right)-\overline{K}\left(e_{1},\nu\right)-\lambda_{1}^{2}.
\]
This implies that 
\begin{align*}
\ric\left(e_{1},e_{1}\right)+aq & =\overline{\biric^{a}}\left(e_{1},\nu\right)+a\left|A\right|^{2}-\lambda_{1}^{2}\\
 & \geq a\left(\lambda_{1}^{2}+\sum_{i=2}^{n}\lambda_{i}^{2}\right)-\lambda_{1}^{2}\\
 & \geq a\left(\lambda_{1}^{2}+\frac{\left(\sum_{i=2}^{n}\lambda_{i}\right)^{2}}{n-1}\right)-\lambda_{1}^{2}\\
 & =\left(\frac{na}{n-1}-1\right)\lambda_{1}^{2}\\
 & \geq0.
\end{align*}
This proves the theorem.
\end{proof}
Taking $a=1$ in this theorem recovers Tanno's theorem \cite{T1996}.
We hope to find a good example of a Riemannian manifold with non-negative
Bi-Ricci$^{a}$ curvature for some $\frac{n-1}{n}\leq a<\frac{n}{n-1}$
but without non-negative Bi-Ricci curvature. Li and Wang's theorem
has a stronger conclusion but assuming the Riemannian manifold with
non-negative sectional curvature and the hypersurface properly immersed
\cite{LW2004}. Cheng's theorem has a stronger conclusion but assuming
the Riemannian manifold with positive Bi-Ricci$^{a}$ curvature and
$3\leq n\leq5$ with $\frac{2}{3}\leq a\leq2$ for $n=3$ and $\frac{n-1}{n}\leq a<\frac{4}{n-1}$
for $n=4,5$ \cite{C2008}.

There has been some interest in studying the topology at infinity
of stable constant mean curvature hypersurfaces \cite{C2000,CCZ2008}.

We prove the following vanishing theorem for $L^{2}$ harmonic one-forms
on complete stable hypersurfaces, not necessarily minimal, in Riemannian
manifolds with non-negative Bi-Ricci$^{a}$ and dimension at most
$7$.
\begin{thm}
\label{vanish applic 2}For a complete non-compact stable hypersurface
$M^{n}$ in a Riemannian manifold $\overline{M}^{n+1}$ with $2\leq n\leq6$
and non-negative Bi-Ricci$^{a}$ curvature for some $\frac{\sqrt{n-1}}{2}\leq a<\frac{n}{n-1}$,
the space of $L^{2}$ harmonic one-forms on $M^{n}$ is trivial.\end{thm}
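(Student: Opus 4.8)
The plan is to run the same argument as in the proof of Theorem~\ref{vanish applic}, the new feature being the mean-curvature terms in the Gauss equation that vanished in the minimal case. Since $M^{n}$ is stable it satisfies a weighted Poincar\'e inequality with weight $q=\left|A\right|^{2}+\overline{\ric}\left(\nu,\nu\right)$, and the prescribed $a$ lies in $\left(0,\tfrac{n}{n-1}\right)$ because $2\leq n\leq6$; so by Theorem~\ref{vanish cor} it suffices to establish the pointwise lower bound $\ric\geq-aq$ on $M^{n}$.

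First I would fix a point of $M^{n}$ and a local orthonormal frame $e_{1},\dots,e_{n}$ diagonalizing the second fundamental form, $A\left(e_{i},e_{j}\right)=\lambda_{i}\delta_{ij}$, with mean curvature $H=\sum_{i}\lambda_{i}$. For a unit tangent vector $v=\sum_{i}c_{i}e_{i}$ the Gauss equation gives $\ric\left(v,v\right)=\overline{\ric}\left(v,v\right)-\overline{K}\left(v,\nu\right)+HA\left(v,v\right)-\left|A\left(v,\cdot\right)\right|^{2}$, where $A\left(v,v\right)=\sum_{i}\lambda_{i}c_{i}^{2}$ and $\left|A\left(v,\cdot\right)\right|^{2}=\sum_{i}\lambda_{i}^{2}c_{i}^{2}$. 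Regrouping the ambient curvature terms into $\overline{\biric^{a}}\left(v,\nu\right)=\overline{\ric}\left(v,v\right)+a\overline{\ric}\left(\nu,\nu\right)-\overline{K}\left(v,\nu\right)$, which is $\geq 0$ by hypothesis, yields
\[
\ric\left(v,v\right)+aq\geq a\sum_{i}\lambda_{i}^{2}+H\sum_{i}\lambda_{i}c_{i}^{2}-\sum_{i}\lambda_{i}^{2}c_{i}^{2}.
\]

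The key point is that the right-hand side, as a function of the variables $t_{i}=c_{i}^{2}$, is affine on the simplex $\left\{ t_{i}\geq0,\ \sum t_{i}=1\right\}$ (the term $a\left|A\right|^{2}$ is constant and the other two are linear in the $t_{i}$), so it attains its minimum at a vertex $c=e_{k}$. Thus it is enough to check $a\sum_{i}\lambda_{i}^{2}+H\lambda_{k}-\lambda_{k}^{2}\geq0$ for every $k$. Relabeling so that $k=1$ and writing $P=\sum_{i\geq 2}\lambda_{i}$, $Q=\sum_{i\geq 2}\lambda_{i}^{2}$, this expression equals $a\lambda_{1}^{2}+\lambda_{1}P+aQ$, and by Cauchy--Schwarz $Q\geq P^{2}/\left(n-1\right)$, so it is bounded below by the quadratic form $a\lambda_{1}^{2}+\lambda_{1}P+\tfrac{a}{n-1}P^{2}$ in $\left(\lambda_{1},P\right)$. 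That form is nonnegative precisely when $4a\cdot\tfrac{a}{n-1}\geq1$, i.e. $a\geq\tfrac{\sqrt{n-1}}{2}$ --- exactly the hypothesis --- while $2\leq n\leq6$ is what makes the interval $\bigl[\tfrac{\sqrt{n-1}}{2},\tfrac{n}{n-1}\bigr)$ nonempty. This establishes $\ric\geq-aq$, and the theorem follows from Theorem~\ref{vanish cor}.

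I expect the only genuinely delicate step to be the reduction from arbitrary unit vectors to the $A$-eigendirections: one cannot simply test the bound on the frame $\{e_{i}\}$, since it need not diagonalize $\ric$, so the affineness-in-$(c_{i}^{2})$ observation (after first discarding the nonnegative $\overline{\biric^{a}}$ term) is what makes the argument go through. Everything else is the Gauss equation together with the one-line discriminant computation, with the dimension restriction emerging from that computation rather than being imposed by hand.
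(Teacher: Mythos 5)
Your proposal is correct and takes essentially the same approach as the paper: stability yields the weighted Poincar\'e inequality with $q=\left|A\right|^{2}+\overline{\ric}\left(\nu,\nu\right)$, the Gauss equation together with $\overline{\biric^{a}}\geq0$ reduces everything to the algebraic inequality $a\lambda_{1}^{2}+\lambda_{1}\sum_{i\geq2}\lambda_{i}+a\sum_{i\geq2}\lambda_{i}^{2}\geq0$, which holds precisely when $a\geq\frac{\sqrt{n-1}}{2}$, and Theorem \ref{vanish cor} concludes. Your two variations --- the affine-in-$c_{i}^{2}$ reduction from arbitrary unit tangent vectors to the $A$-eigendirections (a point the paper leaves implicit, since it verifies the bound only on the diagonalizing frame), and the Cauchy--Schwarz/discriminant computation in place of the paper's completion of squares with its $\lambda_{1}=0$ case split --- are refinements of the same argument rather than a different route.
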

\begin{proof}
Note that $\frac{\sqrt{n-1}}{2}<\frac{n}{n-1}$ for $2\leq n\leq6$.
As in Theorem \ref{vanish applic} to complete the proof it suffices
to show that the Ricci curvature of $M^{n}$ satisfies 
\[
\ric\geq-aq.
\]
As in Theorem \ref{vanish applic} we have
\[
\ric\left(e_{1},e_{1}\right)=\overline{\ric}\left(e_{1},e_{1}\right)-\overline{K}\left(e_{1},\nu\right)+\lambda_{1}\sum_{i=2}^{n}\lambda_{i}.
\]
This shows that
\begin{align*}
\ric\left(e_{1},e_{1}\right)+aq & =\overline{\biric^{a}}\left(e_{1},\nu\right)+\lambda_{1}\sum_{i=2}^{n}\lambda_{i}+a\left|A\right|^{2}\\
 & \geq\lambda_{1}\sum_{i=2}^{n}\lambda_{i}+a\sum_{i=1}^{n}\lambda_{i}^{2}.
\end{align*}
If $\lambda_{1}=0$ then 
\[
\ric\left(e_{1},e_{1}\right)+aq\geq0.
\]
If $\lambda_{1}\neq0$ then
\begin{align*}
\ric\left(e_{1},e_{1}\right)+aq & \geq\lambda_{1}\sum_{i=2}^{n}\lambda_{i}+a\sum_{i=1}^{n}\lambda_{i}^{2}\\
 & =\lambda_{1}^{2}\left(\sum_{i=2}^{n}\left(\frac{\sqrt{a}\lambda_{i}}{\lambda_{1}}+\frac{1}{2\sqrt{a}}\right)^{2}+a-\frac{n-1}{4a}\right)\\
 & \geq\lambda_{1}^{2}\left(a-\frac{n-1}{4a}\right)\\
 & =0.
\end{align*}
This proves the theorem.
\end{proof}
Notice that the proof of the theorem implies following result: For
a hypersurface $M^{n}$ in a Riemannian manifold $\overline{M}^{n+1}$
and $a\geq\frac{\sqrt{n-1}}{2}$ we have 
\[
\ric\left(e_{1},e_{1}\right)+a\left(\left|A\right|^{2}+\overline{\ric}\left(\nu,\nu\right)\right)\geq\overline{\biric^{a}}\left(e_{1},\nu\right).
\]

After the paper was submitted the author became aware that this result
was proved for hypersurfaces in Riemannian manifolds with non-negative
sectional curvature by Kim and Yun \cite{KY2013} and Dung and Seo
\cite{DS2015}. Our technique to find the lower bound of the Ricci
curvature of the hypersurface is more elementary.

There has been some interest in finding estimates for eigenvalues
of the Laplacian of minimal hypersurfaces in the hyperbolic space.
Cheung and Leung proved that for a complete minimal hypersurface $M^{n}$
in the hyperbolic space $\mathbb{H}^{n+1}$ the first eigenvalue of
the Laplacian of $M^{n}$ satisfies the lower bound $\lambda_{1}\left(M^{n}\right)\geq\frac{1}{4}\left(n-1\right)^{2}$
\cite{CL2001}. Candel proved that for a simply connected stable minimal
surface $M^{2}$ in the three-dimensional hyperbolic space $\mathbb{H}^{3}$
the first eigenvalue of the Laplacian of $M^{2}$ satisfies the upper
bound $\lambda_{1}\left(M^{2}\right)\leq\frac{4}{3}$ \cite{C2007}.
Seo proved that for a complete stable minimal hypersurface $M^{n}$
in the hyperbolic space $\mathbb{H}^{n+1}$ if the norm of the second
fundamental form belongs to $L^{2}$ then the first eigenvalue of
the Laplacian of $M^{n}$ satisfies the upper bound $\lambda_{1}\left(M^{n}\right)\leq n^{2}$
\cite{S2011}.

We prove the following vanishing theorem for $L^{2}$ harmonic one-forms
on complete stable minimal hypersurfaces in Riemannian manifolds with
$\overline{\biric^{a}}\geq-b$.
\begin{thm}
\label{lambda applic}For a complete  stable minimal hypersurface
$M^{n}$ in a Riemannian manifold $\overline{M}^{n+1}$ with $\overline{\biric^{a}}\geq-b$
for some $\frac{n-1}{n}\leq a<\frac{n}{n-1}$ and $b>0$, if the first
eigenvalue of the Laplacian of $M^{n}$ satisfies 
\[
\lambda_{1}\left(M^{n}\right)>\frac{b}{\frac{n}{n-1}-a}
\]
then the space of $L^{2}$ harmonic one-forms on $M^{n}$ is trivial.\end{thm}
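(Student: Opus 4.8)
The plan is to argue exactly as in the proof of Theorem \ref{vanish applic}, with the appeal to Theorem \ref{vanish cor} replaced by an appeal to Theorem \ref{lambda cor}. Since $M^{n}$ is a stable hypersurface it satisfies a weighted Poincar\'e inequality with weight function
\[
q=\left|A\right|^{2}+\overline{\ric}\left(\nu,\nu\right),
\]
as recalled at the beginning of this section. Hence, in view of the hypothesis $\lambda_{1}\left(M^{n}\right)>\frac{b}{\frac{n}{n-1}-a}$, Theorem \ref{lambda cor} will give the conclusion once we check that the Ricci curvature of $M^{n}$ satisfies
\[
\ric\geq-aq-b.
\]

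To verify this I would repeat the pointwise computation from Theorem \ref{vanish applic}. At a fixed point take a local orthonormal frame $e_{1},\dots,e_{n}$ diagonalizing the second fundamental form, so $A\left(e_{i},e_{j}\right)=\lambda_{i}\delta_{ij}$; the Gauss equation together with the minimality relation $\lambda_{1}+\sum_{i=2}^{n}\lambda_{i}=0$ gives
\[
\ric\left(e_{1},e_{1}\right)+aq=\overline{\biric^{a}}\left(e_{1},\nu\right)+a\left|A\right|^{2}-\lambda_{1}^{2}.
\]
As in Theorem \ref{vanish applic}, the Cauchy--Schwarz estimate $\sum_{i=2}^{n}\lambda_{i}^{2}\geq\frac{\left(\sum_{i=2}^{n}\lambda_{i}\right)^{2}}{n-1}=\frac{\lambda_{1}^{2}}{n-1}$ combined with $a\geq\frac{n-1}{n}$ yields $a\left|A\right|^{2}-\lambda_{1}^{2}\geq\left(\frac{na}{n-1}-1\right)\lambda_{1}^{2}\geq0$. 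The only difference from Theorem \ref{vanish applic} is that here $\overline{\biric^{a}}\left(e_{1},\nu\right)$ is only assumed to be $\geq-b$ rather than non-negative, so one concludes $\ric\left(e_{1},e_{1}\right)+aq\geq-b$. Since the point and the eigendirection were arbitrary (and for a general unit vector $v$ one has $\ric\left(v,v\right)=\overline{\ric}\left(v,v\right)-\overline{K}\left(v,\nu\right)-\left|A\left(v\right)\right|^{2}$ with $\left|A\left(v\right)\right|^{2}\leq\max_{i}\lambda_{i}^{2}$), this gives $\ric\geq-aq-b$ on all of $M^{n}$.

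With the weighted Poincar\'e inequality, the bound $\ric\geq-aq-b$ for $0<a<\frac{n}{n-1}$ and $b>0$, and the eigenvalue hypothesis in hand, Theorem \ref{lambda cor} applies directly and shows that the space of $L^{2}$ harmonic one-forms on $M^{n}$ is trivial. I do not expect a real obstacle: the argument just combines the stability inequality, the Gauss equation, and the already-proved Theorem \ref{lambda cor}. The two things to keep track of are that $\frac{n-1}{n}\leq a$ is exactly what makes the coefficient $\frac{na}{n-1}-1$ non-negative, and that $a<\frac{n}{n-1}$ is what makes the threshold $\frac{b}{\frac{n}{n-1}-a}$ --- and the hypotheses of Theorem \ref{lambda cor} --- meaningful.
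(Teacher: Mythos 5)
Your proposal is correct and follows the paper's proof essentially verbatim: stability gives the weighted Poincar\'e inequality with weight $q=\left|A\right|^{2}+\overline{\ric}\left(\nu,\nu\right)$, the Gauss-equation computation of Theorem \ref{vanish applic} with $\overline{\biric^{a}}\geq-b$ in place of non-negativity gives $\ric\geq-aq-b$, and Theorem \ref{lambda cor} then yields the conclusion. Your extra parenthetical handling of non-eigendirections of $A$ is a harmless (indeed slightly more careful) elaboration of the same argument.
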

\begin{proof}
By assumption $M^{n}$ satisfies a weighted Poincaré inequality with
weight function 
\[
q=\left|A\right|^{2}+\overline{\ric}\left(\nu,\nu\right).
\]
As in Theorem \ref{vanish applic} the Ricci curvature of $M^{n}$
satisfies 
\[
\ric\geq-aq-b.
\]
The conclusion follows from Theorem \ref{lambda cor}.
\end{proof}
This theorem can be used to obtain an explicit upper bound of the
first eigenvalue of the Laplacian of stable minimal hypersurfaces
in Riemannian manifolds with constant negative sectional curvature.
\begin{cor}
\label{lambda applic cor}For a complete stable minimal hypersurface
$M^{n}$ in a Riemannian manifold $\overline{M}^{n+1}$ with constant
sectional curvature $-\overline{K}^{2}\neq0$, if the space of $L^{2}$
harmonic one-forms on $M^{n}$ is non-trivial then the first eigenvalue
of the Laplacian of $M^{n}$ satisfies 
\[
\lambda_{1}\left(M^{n}\right)\leq\overline{K}^{2}\left(\frac{2n\left(n-1\right)^{2}}{2n-1}\right).
\]
In particular, this estimate holds when $M^{n}$ has at least two
ends.\end{cor}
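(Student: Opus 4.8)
The plan is to feed the space form $\overline{M}^{n+1}$ into Theorem \ref{lambda applic}. First I would record the curvature data: if $\overline{M}^{n+1}$ has constant sectional curvature $-\overline{K}^{2}$, then $\overline{\ric}\left(X,X\right)=-n\overline{K}^{2}$ for every unit vector $X$ and $\overline{K}\left(X,Y\right)=-\overline{K}^{2}$ for every pair of orthonormal vectors $X,Y$, so
\[
\overline{\biric^{a}}\left(X,Y\right)=\overline{\ric}\left(X,X\right)+a\,\overline{\ric}\left(Y,Y\right)-\overline{K}\left(X,Y\right)=-\left(\left(1+a\right)n-1\right)\overline{K}^{2}.
\]
Hence $\overline{\biric^{a}}\geq-b$ with $b=b\left(a\right):=\left(\left(1+a\right)n-1\right)\overline{K}^{2}$, and since $a\geq\frac{n-1}{n}$ and $n\geq2$ one has $b\left(a\right)\geq\left(2n-2\right)\overline{K}^{2}>0$. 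Thus the hypotheses of Theorem \ref{lambda applic} hold for every $a\in\left[\frac{n-1}{n},\frac{n}{n-1}\right)$.

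Since the space of $L^{2}$ harmonic one-forms on $M^{n}$ is non-trivial, the contrapositive of Theorem \ref{lambda applic} gives, for each such $a$,
\[
\lambda_{1}\left(M^{n}\right)\leq\frac{b\left(a\right)}{\frac{n}{n-1}-a}=\overline{K}^{2}\,g\left(a\right),\qquad g\left(a\right):=\frac{na+n-1}{\frac{n}{n-1}-a}.
\]
The remaining task is to minimize $g$ over $\left[\frac{n-1}{n},\frac{n}{n-1}\right)$. Differentiating, the numerator of $g'$ equals $n\left(\frac{n}{n-1}-a\right)+\left(na+n-1\right)=\frac{n^{2}}{n-1}+n-1>0$, so $g$ is strictly increasing and its infimum is attained at $a=\frac{n-1}{n}$, where a short computation gives $g\left(\frac{n-1}{n}\right)=\frac{2n\left(n-1\right)^{2}}{2n-1}$. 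Taking the infimum over $a$ yields $\lambda_{1}\left(M^{n}\right)\leq\overline{K}^{2}\,\frac{2n\left(n-1\right)^{2}}{2n-1}$, the asserted bound. (Equivalently, one applies Theorem \ref{lambda applic} directly with the optimal choice $a=\frac{n-1}{n}$.)

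For the last sentence, assume $M^{n}$ has at least two ends; it suffices to exhibit a non-trivial $L^{2}$ harmonic one-form. By Cheung and Leung's estimate, applied after rescaling the ambient metric so that $\overline{K}^{2}=1$, one gets $\lambda_{1}\left(M^{n}\right)\geq\frac{\left(n-1\right)^{2}}{4}\overline{K}^{2}>0$, so $M^{n}$ has positive spectrum. This forces every end of $M^{n}$ to be non-parabolic by a standard capacity argument: if $\phi\in C_{c}^{\infty}\left(M^{n}\right)$ is supported in an end $E$ and equals $1$ on a compact set $K\subset E$ of positive volume, then, extending $\phi$ by zero to $M^{n}$, $\int_{E}\left|\nabla\phi\right|^{2}=\int_{M^{n}}\left|\nabla\phi\right|^{2}\geq\lambda_{1}\left(M^{n}\right)\int_{M^{n}}\phi^{2}\geq\lambda_{1}\left(M^{n}\right)\vol\left(K\right)>0$, so the capacity of $K$ in $E$ is positive. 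Since $M^{n}$ then has at least two non-parabolic ends, the Li--Tam construction produces a non-constant bounded harmonic function with finite Dirichlet integral; its differential is closed and co-closed, hence a non-trivial $L^{2}$ harmonic one-form, and the estimate above applies.

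The computations are routine; the one place genuinely needing external input is this last one, where passing from ``at least two ends'' to ``non-trivial $L^{2}$ harmonic one-forms'' relies on the Cheung--Leung eigenvalue lower bound (to exclude parabolic ends) together with Li--Tam theory. That is where I would expect any subtlety to lie.
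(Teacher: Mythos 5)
The first half of your proposal is correct and is essentially the paper's own argument: for constant curvature $-\overline{K}^{2}$ one has $\overline{\biric^{a}}\equiv-\left(n+an-1\right)\overline{K}^{2}$, and applying Theorem \ref{lambda applic} (in contrapositive) with $a=\frac{n-1}{n}$, $b=\overline{K}^{2}\left(n+an-1\right)$ gives exactly $\lambda_{1}\left(M^{n}\right)\leq\overline{K}^{2}\frac{2n\left(n-1\right)^{2}}{2n-1}$; your optimization in $a$ merely confirms that the endpoint $a=\frac{n-1}{n}$, which the paper uses directly, is the optimal admissible choice.

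The second half has a genuine gap. You pass from ``at least two ends'' to a non-trivial $L^{2}$ harmonic one-form by claiming that positive spectrum (from Cheung--Leung) forces every end to be non-parabolic, via a capacity computation. That claim is false, and the capacity argument does not prove it: non-parabolicity of an end $E$ in the Li--Tam sense is governed by the capacity of the inner boundary $\partial E$ computed inside $E$ (test functions equal to $1$ on $\partial E$, compactly supported in $\overline{E}$), whereas you bound from below the energy of functions compactly supported in the interior of $E$ and equal to $1$ on an interior compact set; positivity of the latter does not produce the barrier or Neumann Green's function needed for non-parabolicity of the end. A concrete counterexample to the claimed implication is a hyperbolic cusp: the surface $\mathbb{H}^{2}/\langle\gamma\rangle$ with $\gamma$ parabolic has bottom spectrum $\frac{1}{4}>0$, yet its cusp end has finite volume and is parabolic; more generally, Li--Wang \cite{LW2001} show that manifolds with positive spectrum may well have parabolic ends (of exponentially decaying volume). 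So $\lambda_{1}>0$ alone cannot exclude parabolic ends, and some genuinely geometric input is required. The paper supplies it from minimality and the nonpositive ambient curvature: by Hoffman--Spruck \cite{HS1974} the minimal hypersurface $M^{n}$ satisfies a Sobolev inequality, which via H\"older gives a Sobolev inequality as in \cite[Lemma 20.12]{LBOOK} and hence the non-existence of parabolic ends; then two non-parabolic ends plus Li--Tam \cite{LT1992} yield the non-trivial $L^{2}$ harmonic one-form. Your appeal to Li--Tam at the end is fine, but you must first rule out parabolic ends by such a Sobolev (or volume growth) argument rather than by positivity of the spectrum.
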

\begin{proof}
The first conclusion follows by taking $a=\frac{n-1}{n}$ and $b=\overline{K}^{2}\left(n+an-1\right)$
in Theorem \ref{lambda applic}. Now assume that $M^{n}$ has at least
two ends. Since $\overline{M}^{n+1}$ has negative constant sectional
curvature and $M^{n}$ is minimal it follows from \cite{HS1974} that
$M^{n}$ satisfies a certain Sobolev inequality. Using Holder's inequality
gives a Sobolev inequality as in \cite[Lemma 20.12]{LBOOK}, which
implies the non-existence of parabolic ends in $M^{n}$. This shows
that $M^{n}$ has at least two non-parabolic ends. It follows from
\cite{LT1992} that $M^{n}$ admits a non-trivial $L^{2}$ harmonic
one-form. The second conclusion follows from the first conclusion
of the theorem.
\end{proof}
Taking $\overline{M}^{n+1}=\mathbb{H}^{3}$ in this corollary recovers
the upper bound obtained by Candel \cite{C2007} but assuming the
space of $L^{2}$ harmonic one-forms on $M^{2}$ non-trivial. Taking
$\overline{M}^{n+1}=\mathbb{H}^{n+1}$ in the corollary gives the
upper bound $\lambda_{1}\left(M^{n}\right)\leq\frac{2n(n-1)^{2}}{2n-1}$,
which is better than upper bound $\lambda_{1}\left(M^{n}\right)\leq n^{2}$
obtained by Seo \cite{S2011}. Notice that Seo's theorem holds without
assuming the space of $L^{2}$ harmonic one-forms non-trivial but
assuming the norm of the second fundamental form $L^{2}$-integrable.

We prove the following vanishing theorem for $L^{2}$ harmonic one-forms
on complete stable hypersurfaces, not necessarily minimal, in Riemannian
manifolds with $\overline{\biric^{a}}\geq-b$ and dimension at most
$7$.
\begin{thm}
\label{lambda applic 2}For a complete stable hypersurface $M^{n}$
in a Riemannian manifold $\overline{M}^{n+1}$ with $2\leq n\leq6$
and $\overline{\biric^{a}}\geq-b$ for some $\frac{\sqrt{n-1}}{2}\leq a<\frac{n}{n-1}$
and $b>0$, if the first eigenvalue of the Laplacian of $M^{n}$ satisfies
\[
\lambda_{1}\left(M^{n}\right)>\frac{b}{\frac{n}{n-1}-a}
\]
then the space of $L^{2}$ harmonic one-forms on $M^{n}$ is trivial.\end{thm}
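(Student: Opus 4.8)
The plan is to follow, essentially verbatim, the scheme used for Theorem~\ref{lambda applic}, with the minimal-hypersurface curvature estimate of Theorem~\ref{vanish applic} replaced by the non-minimal one established inside the proof of Theorem~\ref{vanish applic 2}. First I would record that the hypotheses are not vacuous: for $2\leq n\leq6$ one checks $\frac{\sqrt{n-1}}{2}<\frac{n}{n-1}$ (this fails at $n=7$, which explains the dimension restriction), so an admissible $a$ exists, and since $b>0$ and $a<\frac{n}{n-1}$ the hypothesis forces $\lambda_{1}(M^{n})>0$ (so $M^{n}$ is automatically non-compact). As recalled at the start of Section~3, a stable hypersurface $M^{n}$ satisfies a weighted Poincar\'e inequality with weight function $q=\left|A\right|^{2}+\overline{\ric}\left(\nu,\nu\right)$; thus the weighted-Poincar\'e hypothesis of Theorem~\ref{lambda cor} is automatic, and everything reduces to producing a suitable Ricci lower bound.

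The substance of the argument is to verify that the Ricci curvature of $M^{n}$ satisfies $\ric\geq-aq-b$. Fix a local orthonormal frame $e_{1},\dots,e_{n}$ on $M^{n}$ diagonalizing the second fundamental form, $A\left(e_{i},e_{j}\right)=\lambda_{i}\delta_{ij}$. The Gauss equation gives, exactly as in the proof of Theorem~\ref{vanish applic 2},
\[
\ric\left(e_{1},e_{1}\right)+aq=\overline{\biric^{a}}\left(e_{1},\nu\right)+\lambda_{1}\sum_{i=2}^{n}\lambda_{i}+a\left|A\right|^{2}.
\]
Since $\overline{\biric^{a}}\left(e_{1},\nu\right)\geq-b$ by hypothesis and $\left|A\right|^{2}=\sum_{i=1}^{n}\lambda_{i}^{2}$, it suffices to note the purely algebraic inequality $\lambda_{1}\sum_{i=2}^{n}\lambda_{i}+a\sum_{i=1}^{n}\lambda_{i}^{2}\geq0$ for $a\geq\frac{\sqrt{n-1}}{2}$, which is precisely the completion-of-squares estimate already carried out in the proof of Theorem~\ref{vanish applic 2} (trivial if $\lambda_{1}=0$, and otherwise obtained by writing the left side as $\lambda_{1}^{2}\left(\sum_{i=2}^{n}\left(\frac{\sqrt{a}\,\lambda_{i}}{\lambda_{1}}+\frac{1}{2\sqrt{a}}\right)^{2}+a-\frac{n-1}{4a}\right)\geq\lambda_{1}^{2}\left(a-\frac{n-1}{4a}\right)\geq0$). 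Therefore $\ric\left(e_{1},e_{1}\right)+aq\geq-b$, and since $e_{1}$ was an arbitrary member of an orthonormal eigenbasis of $A$ at an arbitrary point, this gives $\ric\geq-aq-b$ on all of $M^{n}$.

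With the weighted Poincar\'e inequality in hand, the bound $\ric\geq-aq-b$ for $0<a<\frac{n}{n-1}$ and $b>0$, and the assumption $\lambda_{1}(M^{n})>\frac{b}{\frac{n}{n-1}-a}$, the conclusion is immediate from Theorem~\ref{lambda cor}. I do not anticipate any genuine obstacle: the sole quantitative input is the Ricci estimate, which is literally the computation performed for Theorem~\ref{vanish applic 2}, and the role of the constraint $2\leq n\leq6$ is confined to keeping the interval $[\frac{\sqrt{n-1}}{2},\frac{n}{n-1})$ nonempty. The only point requiring a moment's care is that one is tempted to invoke non-compactness, but this never enters, since Theorem~\ref{lambda cor} requires only completeness.
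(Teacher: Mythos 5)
Your proposal is correct and follows essentially the same route as the paper: stability gives the weighted Poincar\'e inequality with weight $q=\left|A\right|^{2}+\overline{\ric}\left(\nu,\nu\right)$, the Gauss-equation/completion-of-squares estimate from Theorem \ref{vanish applic 2} yields $\ric\geq-aq-b$ for $a\geq\frac{\sqrt{n-1}}{2}$, and Theorem \ref{lambda cor} concludes. The paper's proof is exactly this, stated more briefly by citing the computation in Theorem \ref{vanish applic 2}.
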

\begin{proof}
Note that $\frac{\sqrt{n-1}}{2}<\frac{n}{n-1}$ for $2\leq n\leq6$.
By assumption $M^{n}$ satisfies a weighted Poincaré inequality with
weight function 
\[
q=\left|A\right|^{2}+\overline{\ric}\left(\nu,\nu\right).
\]
As in Theorem \ref{vanish applic 2} the Ricci curvature of $M^{n}$
satisfies 
\[
\ric\geq-aq-b.
\]
The conclusion follows from Theorem \ref{lambda cor}.
\end{proof}
This theorem can be used to obtain an explicit upper bound of the
first eigenvalue of the Laplacian of stable hypersurfaces, not necessarily
minimal, in Riemannian manifolds with constant negative sectional
curvature and dimension at most $7$.
\begin{cor}
\label{lambda applic 2 cor}For a complete stable hypersurface $M^{n}$
in a Riemannian manifold $\overline{M}^{n+1}$ with constant sectional
curvature $-\overline{K}^{2}\neq0$ and dimension $2\leq n\leq6$,
if the space of $L^{2}$ harmonic one-forms on $M^{n}$ is non-trivial
then the first eigenvalue of the Laplacian of $M^{n}$ satisfies 
\[
\lambda_{1}\left(M^{n}\right)\leq\overline{K}^{2}\left(\frac{n-1+\frac{\sqrt{n-1}}{2}n}{\frac{n}{n-1}-\frac{\sqrt{n-1}}{2}}\right).
\]
\end{cor}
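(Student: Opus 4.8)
The plan is to follow the template of Corollary \ref{lambda applic cor}: specialize the hypothesis $\overline{\biric^{a}}\geq-b$ to the case of a space form, compute the optimal constant $b$, and then invoke Theorem \ref{lambda applic 2} in contrapositive form.

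First I would record the curvature data of the ambient space. If $\overline{M}^{n+1}$ has constant sectional curvature $-\overline{K}^{2}$, then for any orthonormal pair $X,Y$ one has $\overline{K}\left(X,Y\right)=-\overline{K}^{2}$ and $\overline{\ric}\left(X,X\right)=\overline{\ric}\left(Y,Y\right)=-n\overline{K}^{2}$, so that
\[
\overline{\biric^{a}}\left(X,Y\right)=\overline{\ric}\left(X,X\right)+a\overline{\ric}\left(Y,Y\right)-\overline{K}\left(X,Y\right)=-\overline{K}^{2}\left(n-1+an\right).
\]
Hence $\overline{M}^{n+1}$ satisfies $\overline{\biric^{a}}\geq-b$ with $b=\overline{K}^{2}\left(n-1+an\right)$, and since $\overline{K}^{2}\neq0$ this $b$ is strictly positive.

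Next I would choose $a=\frac{\sqrt{n-1}}{2}$. For $2\leq n\leq6$ we have $\frac{\sqrt{n-1}}{2}<\frac{n}{n-1}$ (this is exactly the inequality noted at the start of the proof of Theorem \ref{vanish applic 2}, and it fails at $n=7$, which is why the dimension restriction appears), so the hypotheses $\frac{\sqrt{n-1}}{2}\leq a<\frac{n}{n-1}$ and $b>0$ of Theorem \ref{lambda applic 2} are met. That theorem then gives: if
\[
\lambda_{1}\left(M^{n}\right)>\frac{b}{\frac{n}{n-1}-a}=\overline{K}^{2}\left(\frac{n-1+\frac{\sqrt{n-1}}{2}n}{\frac{n}{n-1}-\frac{\sqrt{n-1}}{2}}\right),
\]
then $M^{n}$ carries no non-trivial $L^{2}$ harmonic one-form. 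One should also note that the denominator $\frac{n}{n-1}-\frac{\sqrt{n-1}}{2}$ is positive precisely because $n\leq6$, so the right-hand side is a genuine positive bound. Taking the contrapositive yields exactly the claimed estimate on $\lambda_{1}\left(M^{n}\right)$ whenever the space of $L^{2}$ harmonic one-forms is non-trivial.

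There is essentially no obstacle here; the only points requiring a line of care are the elementary computation of $\overline{\biric^{a}}$ for a space form and the verification that the dimension bound $n\leq6$ is what simultaneously keeps $a<\frac{n}{n-1}$ and makes the quotient $\frac{b}{\frac{n}{n-1}-a}$ meaningful and positive. Everything else is a direct substitution into Theorem \ref{lambda applic 2}.
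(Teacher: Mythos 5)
Your proposal is correct and coincides with the paper's proof: the paper likewise takes $a=\frac{\sqrt{n-1}}{2}$ and $b=\overline{K}^{2}\left(n+an-1\right)$ in Theorem \ref{lambda applic 2}, and your computation of $\overline{\biric^{a}}$ for the space form together with the check that $\frac{\sqrt{n-1}}{2}<\frac{n}{n-1}$ for $2\leq n\leq6$ just makes explicit the details the paper leaves to the reader.
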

\begin{proof}
The conclusion follows by taking $a=\frac{\sqrt{n-1}}{2}$ and $b=\overline{K}^{2}\left(n+an-1\right)$
in Theorem \ref{lambda applic 2}.
\end{proof}
In particular, we obtain the following result.
\begin{cor}
For a complete stable surface $M^{2}$, not necessarily minimal, in
the hyperbolic space $\mathbb{H}^{3}$, if the space of $L^{2}$ harmonic
one-forms on $M^{2}$ is non-trivial then the first eigenvalue of
the Laplacian of $M^{2}$ satisfies 
\[
\lambda_{1}\left(M^{2}\right)\leq\frac{4}{3}.
\]

\end{cor}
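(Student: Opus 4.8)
The plan is to obtain this statement as the special case $\overline{M}^{n+1}=\mathbb{H}^{3}$ of Corollary \ref{lambda applic 2 cor}. First I would observe that the hyperbolic space $\mathbb{H}^{3}$ is a complete Riemannian manifold of dimension $3=n+1$ with $n=2$ and constant sectional curvature $-1$, so in the notation of Corollary \ref{lambda applic 2 cor} one has $\overline{K}^{2}=1$; moreover the dimension restriction $2\leq n\leq6$ is satisfied with $n=2$. Thus a complete stable surface $M^{2}$ in $\mathbb{H}^{3}$ with a non-trivial space of $L^{2}$ harmonic one-forms falls exactly under the hypotheses of that corollary.

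Next I would substitute $n=2$ and $\overline{K}^{2}=1$ into the displayed upper bound of Corollary \ref{lambda applic 2 cor}. Evaluating the constant, the numerator $n-1+\frac{\sqrt{n-1}}{2}\,n$ equals $1+1=2$ and the denominator $\frac{n}{n-1}-\frac{\sqrt{n-1}}{2}$ equals $2-\frac{1}{2}=\frac{3}{2}$, so the right-hand side reduces to $\frac{2}{3/2}=\frac{4}{3}$. Hence $\lambda_{1}\left(M^{2}\right)\leq\frac{4}{3}$, which is the claimed estimate.

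There is essentially no obstacle: the statement is a direct instantiation of the previously established corollary, and the only point worth verifying is that the auxiliary constants used in the proof of Corollary \ref{lambda applic 2 cor}, namely $a=\frac{\sqrt{n-1}}{2}$ and $b=\overline{K}^{2}\left(n+an-1\right)$, remain admissible when $n=2$. Indeed $a=\frac{1}{2}$ satisfies $\frac{\sqrt{n-1}}{2}\leq a<\frac{n}{n-1}$ since $\frac{1}{2}<2$, and $b=1\cdot\left(2+\frac{1}{2}\cdot2-1\right)=2>0$, so Theorem \ref{lambda applic 2} applies and the chain of implications in Corollary \ref{lambda applic 2 cor} goes through without change, yielding the result.
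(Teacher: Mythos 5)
Your proof is correct and is exactly the paper's route: the corollary is obtained as the special case $n=2$, $\overline{K}^{2}=1$ of Corollary \ref{lambda applic 2 cor}, and your arithmetic giving $\frac{2}{3/2}=\frac{4}{3}$ (together with the check that $a=\frac{1}{2}$, $b=2$ are admissible in Theorem \ref{lambda applic 2}) matches the intended specialization.
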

This corollary extends the upper bound obtained by Candel \cite{C2007}
to non-minimal surfaces, but assuming the space of $L^{2}$ harmonic
one-forms on $M^{2}$ non-trivial.

\lyxaddress{Matheus Vieira}

\lyxaddress{Departamento de Matemática, Universidade Federal do Espírito Santo,
Vitória, 29075-910, Brazil}

\lyxaddress{e-mail: matheus.vieira@ufes.br}

\begin{thebibliography}{10}
\bibitem{CGH2000}D. M. Calderbank, P. Gauduchon and M. Herzlich.
Refined Kato inequalities and conformal weights in Riemannian geometry.
J. Funct. Anal. 173 (2000), 214\textendash{}255.

\bibitem{C2007}A. Candel. Eigenvalue estimates for minimal surfaces
in hyperbolic space. Trans. Amer. Math. Soc. 359 (2007), 3567\textendash{}3575.

\bibitem{CSZ1997}H.-D. Cao, Y. Shen and S. Zhu. The structure of
stable minimal hypersurfaces in $R^{n+1}$. Math. Res. Lett. 4 (1997),
637\textendash{}644.

\bibitem{C2000}X. Cheng. $L^{2}$ harmonic forms and stability of
hypersurfaces with constant mean curvature. Bol. Soc. Brasil. Mat.
(N.S.) 31 (2000), no. 2, 225\textendash{}239.

\bibitem{C2008}X. Cheng. One end theorem and application to stable
minimal hypersurfaces. Arch. Math. 90 (2008), 461\textendash{}470.

\bibitem{CCZ2008}X. Cheng, L.-F. Cheung and D. Zhou. The structure
of weakly stable constant mean curvature hypersurfaces. Tohoku Math.
J. (2) 60 (2008), 101\textendash{}121.

\bibitem{CL2001}L.-F. Cheung and P.-F. Leung. Eigenvalue estimates
for submanifolds with bounded mean curvature in the hyperbolic space.
Math. Z. 236 (2001), 525\textendash{}530.

\bibitem{CZ2012}D. Cibotaru and P. Zhu. Refined Kato inequalities
for harmonic fields on Kähler manifolds. Pacific J. Math. 256 (2012),
51\textendash{}66.

\bibitem{DS2015}N. T. Dung and K. Seo. Vanishing theorems for $L^{2}$
harmonic 1-forms on complete submanifolds in a Riemannian manifold.
J. Math. Anal. Appl. 423 (2015), no. 2, 1594\textendash{}1609.

\bibitem{FCS1980}D. Fischer-Colbrie and R. Schoen. The structure
of complete stable minimal surfaces in 3-manifolds of nonnegative
scalar curvature. Comm. Pure Appl. Math. 33 (1980), 199\textendash{}211.

\bibitem{HS1974}D. Hoffman and J. Spruck. Sobolev and isoperimetric
inequalities for Riemannian submanifolds. Comm. Pure Appl. Math. 27
(1974), 715\textendash{}727.

\bibitem{KY2013}J.-J. Kim and G. Yun. On the structure of complete
hypersurfaces in a Riemannian manifold of nonnegative curvature and
$L^{2}$ harmonic forms. Arch. Math. 100 (2013), no. 4, 369\textendash{}380.

\bibitem{KLZ2008}S. Kong, P. Li and D. Zhou. Spectrum of the Laplacian
on quaternionic Kähler manifolds. J. Differential Geom. 78 (2008),
295\textendash{}332.

\bibitem{L2010}K.-H. Lam. Results on a weighted Poincaré inequality
of complete manifolds. Trans. Amer. Math. Soc. 362 (2010), 5043\textendash{}5062.

\bibitem{LBOOK}P. Li. Geometric analysis. Cambridge Studies in Advanced
Mathematics, 134. Cambridge University Press, Cambridge, 2012. x+406
pp.

\bibitem{LT1992}P. Li and L.-F. Tam. Harmonic functions and the structure
of complete manifolds. J. Differential Geom. 35 (1992), 359\textendash{}383.

\bibitem{LW2001}P. Li and J. Wang. Complete manifolds with positive
spectrum. J. Differential Geom. 58 (2001), 501\textendash{}534.

\bibitem{LW2002}P. Li and J. Wang. Complete manifolds with positive
spectrum, II. J. Differential Geom. 62 (2002), 143\textendash{}162.

\bibitem{LW2004}P. Li and J. Wang. Stable minimal hypersurfaces in
a nonnegatively curved manifold. J. Reine Angew. Math. 566 (2004),
215\textendash{}230.

\bibitem{LW2005}P. Li and J. Wang. Comparison theorem for Kähler
manifolds and positivity of spectrum. J. Differential Geom. 69 (2005),
43\textendash{}74.

\bibitem{LW2006}P. Li and J. Wang. Weighted Poincaré inequality and
rigidity of complete manifolds. Ann. Sci. École Norm. Sup. (4) 39
(2006), 921\textendash{}982.

\bibitem{LW2009}P. Li and J. Wang. Connectedness at infinity of complete
Kähler manifolds. Amer. J. Math. 131 (2009), 771\textendash{}817.

\bibitem{M1993}R. Miyaoka. $L^{2}$ harmonic 1-forms on a complete
stable minimal hypersurface. Geometry and global analysis (Sendai,
1993), 289\textendash{}293, Tohoku Univ., Sendai, 1993.

\bibitem{P1991}B. Palmer. Stability of minimal hypersurfaces. Comment.
Math. Helv. 66 (1991), 185\textendash{}188.

\bibitem{S2011}K. Seo. Stable minimal hypersurfaces in the hyperbolic
space. J. Korean Math. Soc. 48 (2011), 253\textendash{}266.

\bibitem{T1996}S. Tanno. $L^{2}$ harmonic forms and stability of
minimal hypersurfaces. J. Math. Soc. Japan 48 (1996), 761\textendash{}768.

\bibitem{SY1996}Y. Shen and R. Ye. On stable minimal surfaces in
manifolds of positive bi-Ricci curvatures. Duke Math. J. 85 (1996),
109\textendash{}116.

\bibitem{Y1976}S. T. Yau. Some function-theoretic properties of complete
Riemannian manifold and their applications to geometry. Indiana Univ.
Math. J. 25 (1976), 659\textendash{}670.

\end{thebibliography}
\end{document}